\documentclass[10pt,reqno]{amsart}
\topmargin= .5cm
\textheight= 22.5cm
\textwidth= 32cc
\baselineskip=16pt
\usepackage{indentfirst, amssymb,amsmath,amsthm}
\evensidemargin= .9cm
\oddsidemargin= .9cm
\newtheorem*{theoA}{Theorem A}
\newtheorem*{theoB}{Theorem B}
\newtheorem*{theoC}{Theorem C}
\newtheorem*{theoD}{Theorem D}
\newtheorem*{theoE}{Theorem E}

\newtheorem{theo}{Theorem}
\newtheorem{lem}{Lemma}

\newtheorem{ex}{Example}
\newtheorem{rem}{Remark}

\newcommand{\ol}{\overline}
\newcommand{\be}{\begin{equation}}
\newcommand{\ee}{\end{equation}}
\newcommand{\beas}{\begin{eqnarray*}}
\newcommand{\eeas}{\end{eqnarray*}}
\newcommand{\bea}{\begin{eqnarray}}
\newcommand{\eea}{\end{eqnarray}}
\renewcommand{\epsilon}{\varepsilon}
\numberwithin{equation}{section}
\numberwithin{lem}{section}
\numberwithin{theo}{section}
\numberwithin{cor}{section}
\numberwithin{ex}{section}
\numberwithin{defi}{section}
\numberwithin{rem}{section}
\numberwithin{note}{section}

\begin{document}
\title[Characterization of exponential polynomial as solution...]{Characterization of exponential polynomial as solution of certain type of non-linear delay-differential equation}
\author[A. Banerjee and T. Biswas]{Abhijit Banerjee and Tania Biswas}
\date{}
\address{Department of Mathematics, University of Kalyani, West Bengal 741235, India.}
\email{abanerjee\_kal@yahoo.co.in, taniabiswas2394@gmail.com}
\renewcommand{\thefootnote}{}
\footnote{2010 {\emph{Mathematics Subject Classification}}: 39B32, 30D35.}
\footnote{\emph{Key words and phrases}: Exponential polynomial, differential-difference equation, convex hull, Nevanlinna theory.}
\renewcommand{\thefootnote}{\arabic{footnote}}
\setcounter{footnote}{0}
\begin{abstract} 
	In this paper, we have characterized the nature and form of solutions of the following non-linear delay-differential equation: $$f^{n}(z)+\sum_{i=1}^{n-1}b_{i}f^{i}(z)+q(z)e^{Q(z)}L(z,f)=P(z),$$ where $b_i\in\mathbb{C}$, $L(z,f)$ be a linear delay-differential polynomial of $f$; $n$ be positive integers; $q$, $Q$ and $P$ respectively be non-zero, non-constant and any polynomials. Different special cases of our result will accommodate all the results of ([J. Math. Anal. Appl., 452(2017), 1128-1144.], [Mediterr. J. Math., 13(2016), 3015-3027], [Open Math., 18(2020), 1292-1301]). Thus our result  can be considered as an improvement of all of them. We have also illustrated a handful number of examples to show that all the cases as demonstrated in our theorem actually occurs and consequently the same are automatically applicable to the previous results.
\end{abstract}
\thanks{Typeset by \AmS -\LaTeX}
\maketitle
\section{Introduction, Results and Examples} 
Throughout the paper, we denote by $f$ a meromorphic function in the complex plane $\mathbb{C}$ and related to the function, we assume that the readers are familiar with the basic terms like $T(r,f)$, $N(r,f)$, $m(r, f)$, of Nevanlinna value distribution theory of meromorphic functions (see \cite{Hayman_Oxford,Laine_Gruyter}). The notation $S(r,f)$, will be used to define any quantity satisfying $S(r,f)=o(T(r,f))$ as $r\rightarrow \infty$, possibly outside a set $E$ of $r$ of finite logarithmic measure. In addition, we will respectively use the symbols $\rho(f)$, $\lambda(f)$ and $\tau(f)$ to denote the order, exponent of convergent and type of $f$. The symbol $L(f)$ will be used to represent a linear differential polynomial in $f$ with polynomial coefficients.  Also, throughout this paper, by $card(S)$, we mean the cardinality of a set $S$, i.e., the number of elements in $S$.\par

Considering the non-linear differential equation $$L(f)-p(z)f^n(z)=h(z),$$ in 2001, Yang \cite{Yang-Aust} investigated about the transcendental finite order entire solutions $f$ of the  equation, where $p(z)$ is a non-vanishing polynomial, $h(z)$ is entire and $n\geq 4$ is an integer.\par 
In 2010, Yang-Laine \cite{Yang-Laine-Jpn} % considered the non-linear differential equation \beas f^n(z)+L(f)=h(z),\eeas where $L(f)$ is a linear differential-difference polynomial in $f$ with meromorphic coefficients of growth $S(r, f)$, $h(z)$ is meromorphic, $n \geq 2$ is an integer. In particular, they also
showed that the equation $$f(z)^2 + q(z)f(z + 1) = p(z),$$ where $p(z)$, $q(z)$ are polynomials, admits no transcendental entire solutions of finite order.\par In the last two decades researchers mainly studied (see \cite{Biswas-Banerjee,Li-Lu-Xu,Liao-Yang-Zhang,Yang-Aust,Zhang-Liao} etc.) about the following three distinct features of solutions of shift or delay-differential or differential equations:\\
i) existence and non-existence conditions,\\ ii) order of growth and \\ iii) different types of forms of solutions. \par
Next, let us consider the exponential polynomial $f(z)$, defined by the form \bea\label{e1.1} f(z) = P_1(z)e^{Q_1(z)} + \cdots + P_k(z)e^{Q_k(z)},\eea
where $P_j$'s and $Q_j$'s are polynomials in $z$. Steinmetz \cite{Steinmetz}, showed that (\ref{e1.1}) can be written in the normalized form
\bea\label{e1.2}f(z) = H_0(z) + H_1(z)e^{\omega_1z^t}+\cdots + H_m(z)e^{\omega_mz^t},\eea where $H_j$ are either exponential polynomials of order $< t$ or ordinary polynomials in $z$, the leading coefficients $\omega_j$ are pairwise distinct and $m\leq k$.\par
Let $co(\mathcal{W})$ be the convex hull of a set $\mathcal{W} \subset\mathbb{C}$ which is the intersection of all convex sets containing $\mathcal{W}$. If $\mathcal{W}$ contains finitely many elements then $co(\mathcal{W})$ is obtained as an intersection of finitely many half-planes, then $co(\mathcal{W})$ is either a compact polygon with a non-empty interior or a line segment. We denote by $C(co(\mathcal{W}))$, the circumference of $co(\mathcal{W})$. If $co(\mathcal{W})$ is a line-segment, then $C(co(\mathcal{W}))$ is equals to twice the length of this line segment. Throughout the paper, we denote $W =\{\bar{\omega}_1,\bar{\omega}_2,\ldots,\bar{\omega}_m\}$ and $W_0 = W \cup \{0\}$.\par 
Now-a-days, to find the form of exponential polynomials as solution of certain non-linear differential-difference equation has become an interesting topic among researchers  (see \cite{Chen-Gao-Zhang_CMFT1,Chen-Hu-Wang_CMFT2,Xu-Rong}). Most probably, in this regard, the first attempt was made by Wen-Heittokangas-Laine \cite{Wen-Heittokangas-Laine}. In 2012, they considered the equation \bea\label{e1.3}f(z)^n + q(z)e^{Q(z)}f(z + c) = P(z),\eea
where $q(z)$, $Q(z)$, $P(z)$ are polynomials, $n\geq 2$ is an integer, $c\in\mathbb{C}\backslash \{0\}$. Wen-Heittokangas-Laine also pointed out that for a non-constant polynomial $\alpha(z)$ and $d  \in\mathbb{C}$, every solution $f$ of the form (\ref{e1.2}) reduces to a function which belongs to one of the following classes:
\beas\Gamma_1 &=&\{e^{\alpha(z)} + d \},\\
\Gamma_0 &=& \{e^{\alpha(z)}\}\eeas and classified finite order entire(meromorphic) solutions of (\ref{e1.3}) as follows: 
\begin{theoA}\cite{Wen-Heittokangas-Laine}
	Let $n \geq 2$ be an integer, let $c \in\mathbb{C}\backslash \{0\}$, $q(z)$, $Q(z)$, $P(z)$ be polynomials such that $Q(z)$ is not a constant and $q(z) \not\equiv 0$. Then the finite order entire solutions $f$ of equation (\ref{e1.3}) satisfies the following conclusions:
	\begin{itemize}
		\item [(a)] Every solution $f$ satisfies $\rho(f) = \deg Q$ and is of mean type.
		\item [(b)] Every solution $f$ satisfies $\lambda(f) = \rho(f)$ if and only if $P(z) \not\equiv 0$.
		\item [(c)] A solution $f$ belongs to $\Gamma_0$ if and only if $P(z) \equiv 0$. In particular, this is the case if $n \geq 3$.
		\item [(d)] If a solution $f$ belongs to $\Gamma_0$ and if $g$ is any other finite order entire solution of (\ref{e1.3}), then $f = \eta g$, where $\eta^{ n-1} = 1$.
		\item [(e)] If $f$ is an exponential polynomial solution of the form (\ref{e1.2}), then $f \in \Gamma_1$. Moreover, if $f\in \Gamma_1\backslash \Gamma_0$, then $\rho(f) = 1$.
	\end{itemize}
\end{theoA}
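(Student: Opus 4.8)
The plan is to prove each part by exploiting the normalized Steinmetz form (1.2) together with the structural constraints forced by the equation (1.3). Let me work through the strategy for Theorem A.

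For part (a): I would substitute the form (1.2) into (1.3). The term $f(z)^n$ contributes exponentials with leading coefficients $n\omega_j$ and, more generally, sums of $n$ chosen from the $\omega_j$, while $q(z)e^{Q(z)}f(z+c)$ contributes a shift $\deg Q$ to the order. The key observation is that $\rho(f)=t$ in (1.2), and the dominant exponential terms on the left must balance. I expect that comparing growth forces $\rho(f)=\deg Q$: if $\rho(f)>\deg Q$ then $f^n$ dominates and cannot cancel against a polynomial $P(z)$ on the right, whereas if $\rho(f)<\deg Q$ the middle term dominates, again contradicting the polynomial right-hand side. Mean type should follow from a refined estimate on $m(r,f)$ using the explicit exponential structure.

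For parts (b) and (c): these concern $\lambda(f)=\rho(f)$ and membership in $\Gamma_0$, both tied to whether $P\equiv 0$. The natural tool is Nevanlinna's second main theorem or a direct zero-counting argument. When $P\not\equiv 0$, I would count zeros of $f$ by analyzing (1.3) as giving $f^n = P - q e^Q f(\cdot+c)$; the presence of $P$ prevents $f$ from being zero-free, pushing $\lambda(f)$ up to full order. Conversely when $P\equiv 0$, the equation factors and forces $f$ to be of the form $e^{\alpha(z)}$, giving $f\in\Gamma_0$ and hence $\lambda(f)<\rho(f)$. The claim that $n\geq 3$ automatically forces $P\equiv 0$ I would obtain by a counting contradiction: for $n\geq 3$, the term $f^n$ has too-high multiplicity structure for its value-distribution to be balanced by a single exponential shift term unless $P\equiv 0$.

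For parts (d) and (e): part (d) is an essentially algebraic uniqueness argument. Given two solutions $f,g\in\Gamma_0$, writing $f=e^{\alpha}$ and substituting both into (1.3) with $P\equiv 0$, I would divide the two resulting identities to conclude that $f/g$ is a root of unity of order dividing $n-1$. For part (e), the main work is showing an exponential-polynomial solution must collapse to at most one exponential term plus a constant, i.e.\ lie in $\Gamma_1$. I would use the convex-hull machinery: the leading coefficients $\omega_j$ and their $n$-fold sums, together with the shifted $Q$-term, must lie on the boundary of $co(W_0)$ in a way compatible with (1.3), and a counting of vertices of the convex hull forces $m\leq 1$. The assertion $\rho(f)=1$ for $f\in\Gamma_1\setminus\Gamma_0$ then follows since such $f=e^{\alpha}+d$ with $d\neq 0$ must satisfy $\deg\alpha=1$ to be consistent with the polynomial coefficients.

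The hard part will be part (e), specifically the convex-hull argument controlling the number of surviving exponential terms. The delicate point is that $f^n$ generates many sum-frequencies $\omega_{i_1}+\cdots+\omega_{i_n}$, and one must show that the requirement of cancellation against the single frequency introduced by $e^{Q}f(z+c)$—landing only on the boundary vertices of the convex hull of the frequency set—is so restrictive that all but one $H_j$ must vanish. Managing the interplay between these generated frequencies and the extremal geometry of $co(W_0)$, while simultaneously tracking the polynomial (rather than merely exponential) nature of the coefficients $H_j$, is where the real care is needed.
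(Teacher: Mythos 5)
First, a point of orientation: the paper does not prove Theorem~A at all --- it is quoted verbatim from Wen--Heittokangas--Laine and is subsumed as a special case of Theorem~\ref{t1.1}, so the only fair comparison is against the techniques the paper deploys for the generalization (the proofs of Theorem~\ref{t1.1}(i)--(v) and Lemmas~\ref{l8}--\ref{l10}). Measured against those, your plan for part~(a) has a genuine gap at its very first step: you propose to ``substitute the form~(\ref{e1.2}) into~(\ref{e1.3})'', but parts (a)--(d) concern \emph{arbitrary} finite-order entire solutions, which need not be exponential polynomials; the normalized Steinmetz form~(\ref{e1.2}) is only available under the extra hypothesis of part~(e). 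The argument that actually works (and that the paper uses for Theorem~\ref{t1.1}(i)) is purely Nevanlinna-theoretic: write $f^n = P - q e^{Q} f(z+c)$, apply $m\bigl(r, f(z+c)/f\bigr)=S(r,f)$ (the Chiang--Feng shift analogue of the logarithmic derivative lemma, Lemma~\ref{l3}) to get $(n-1)T(r,f)\le T(r,e^{Q})+S(r,f)$, and then compare orders on both sides of the equation for the reverse inequality. Your frequency-balancing heuristic cannot be made to cover non-exponential-polynomial solutions, so as written part~(a) does not get off the ground.

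Parts (b) and (c) are sketched too loosely to assess as proofs, and the two tools that make them close are absent from your outline: (i) the relation $N(r,1/f(z+c))=N(r,1/f)+S(r,f)$ (Lemma~\ref{l4}), which is what transfers zero-scarcity of $f$ to zero-scarcity of $f^n-P$ so that the second main theorem (with $P$ as a small target) yields the contradiction when $P\not\equiv 0$ and $\lambda(f)<\rho(f)$; and (ii) for the converse, ``the equation factors and forces $f=e^{\alpha}$'' is not an argument --- one must first get $(n-1)N(r,1/f)=S(r,f)$ from the factored equation, then invoke Hadamard factorization $f=h e^{\alpha}$ and run a Borel-type exponential-sum argument (Lemma~\ref{l5}, as in the paper's proof of Theorem~\ref{t1.1}(iii)) to show the canonical product $h$ is constant, which is what $\Gamma_0$ (no polynomial prefactor) actually requires. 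By contrast, your plan for (d) (divide the two identities and force $\eta^{n-1}=1$) is the standard and correct route, and your plan for (e) --- comparing the frequency set generated by $f^n$ with the single shifted frequency contributed by $e^{Q}f(z+c)$ via the extremal geometry of $co(W_0)$ --- is exactly the strategy of Lemmas~\ref{l8} and~\ref{l9}, where the comparison is made quantitative through Steinmetz's formula $T(r,f)=C(co(W_0))\,r^{t}/2\pi+o(r^{t})$ (Lemma~\ref{l7}) and the circumference inequality $C(co(X_3))>C(co(W_0))$; you have correctly identified that this is where the real work lies.
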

Inspired by {\it Theorem A}, in 2016, Liu \cite{Liu_Mediterr_2016} replaced $f(z+c)$ by $f^{(k)}(z+c)$ in (\ref{e1.3}) and for two polynomials $p_1(z)$, $p_2(z)$ and a non-constant polynomial $\alpha(z)$, introduced two new classes of solutions:
\beas\Gamma_1' &=&\{p_1(z)e^{\alpha(z)} + p_2(z) \},\\
\Gamma_0' &=& \{p_1(z)e^{\alpha(z)}\}\eeas
to obtain the following theorem.
\begin{theoB}\cite{Liu_Mediterr_2016}
	Under the same situation as in {\em Theorem A} with $k\geq 1$, the finite-order transcendental	entire solution $f$ of \bea\label{e1.4}f(z)^n + q(z)e^{Q(z)}f^{(k)}(z + c) = P(z),\eea should satisfy the results {\em (a), (b), (d)} and
	\begin{itemize}
		\item [(1)] A solution $f$ belongs to $\Gamma_0'$ if and only if $P(z) \equiv 0$. In particular, this is the case if $n \geq 3$.
		\item [(2)] If $f$ is an exponential polynomial solution of (\ref{e1.4}) of the form (\ref{e1.2}), then $f \in \Gamma_1'$.
	\end{itemize}
\end{theoB}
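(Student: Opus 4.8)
The plan is to run the Wen--Heittokangas--Laine scheme behind \emph{Theorem A} on equation (\ref{e1.4}), the only essentially new feature being $f^{(k)}(z+c)$ in place of $f(z+c)$. The reduction that makes this transparent is the pair of estimates $T(r,f^{(k)}(z+c)) = T(r,f) + S(r,f)$ and $m\big(r, f^{(k)}(z+c)/f\big) = S(r,f)$, both valid for a finite-order entire $f$: since $f$ is entire, $N(r,f^{(k)}(z+c))\equiv 0$, the logarithmic derivative lemma gives $m(r, f^{(k)}/f) = S(r,f)$, and the Chiang--Feng / Halburd--Korhonen shift results give $T(r,f(z+c)) = T(r,f)+S(r,f)$ and $m(r, f(z+c)/f) = S(r,f)$. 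Thus, up to $S(r,f)$, the derivative-shift term behaves exactly like the shift term in (\ref{e1.3}), and the characteristic estimates transfer.

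For (a) I would rewrite (\ref{e1.4}) as $f^{n} = P - q e^{Q} f^{(k)}(z+c)$ and compare Nevanlinna functions. As $f$ is transcendental, $\log r = S(r,f)$ and $n\,T(r,f) = T\big(r, q e^{Q} f^{(k)}(z+c)\big) + S(r,f)$; bounding the right-hand proximity function above and below by means of the estimates of the previous paragraph, together with $T(r,e^{Q}) \sim C\,r^{\deg Q}$, yields $(n-1)\,T(r,f) \le T(r,e^{Q}) + S(r,f)$ and a matching lower bound of the same order, whence $T(r,f) \asymp r^{\deg Q}$. This simultaneously gives $\rho(f) = \deg Q$ and that $f$ is of mean type. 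The only delicate point is the lower bound, i.e. excluding cancellation between $f^{n}$ and $q e^{Q} f^{(k)}(z+c)$; this is secured by first noting $\rho(f) \ge \deg Q$ (were $\rho(f) < \deg Q$, the middle term would force the left side to have order $\deg Q \ge 1$, impossible for the polynomial $P$) and then estimating $m(r,e^{Q})$ through $e^{Q} = q e^{Q} f^{(k)}(z+c)\big/\big(q f^{(k)}(z+c)\big)$.

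For (b), (1) and (d) I would use the Hadamard factorization $f = \pi e^{v}$ with $\deg v = \rho(f) = \deg Q$ and $\rho(\pi) = \lambda(f)$. Writing $f^{(k)}(z+c) = R_{k}\,e^{v(z+c)}$, where $R_{k}$ is a linear differential polynomial in $\pi(z+c)$ with coefficients built from derivatives of $v$, and substituting into (\ref{e1.4}) separates the equation into a dominant exponential factor and a remainder. When $P \equiv 0$ this reads $\pi^{n}e^{\,n v(z)-v(z+c)-Q(z)} = -q R_{k}$; since the right side has order $\lambda(f) < \deg Q$, a degree count forces the top coefficients of $v$ and $Q$ to satisfy the balance $(n-1)(\text{lead }v) = \text{lead }Q$ and, using $n\ge 2$, reduces the surviving identity to a Clunie-type inequality $(n-1)T(r,\pi) = S(r,\pi)$, collapsing $\pi$ to a \emph{polynomial}; this proves $f \in \Gamma_0'$, and run in reverse it gives $\lambda(f) = \rho(f) \iff P \not\equiv 0$. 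The clause ``$n \ge 3$'' follows because the exponential balance then cannot be met with $P \not\equiv 0$. Statement (d) is a short consequence: a second solution $g$ of the $P \equiv 0$ equation also lies in $\Gamma_0'$, matching exponentials gives $\alpha - \beta \equiv \text{const}$ and proportional polynomial parts, whence $f = \eta g$ with $\eta^{\,n-1} = 1$.

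The genuinely hard part is (2). Here I would invoke the Steinmetz normal form (\ref{e1.2}) and the convex-hull growth asymptotics $T(r,f) \sim \frac{C(co(W_0))}{2\pi}\,r^{t}$, substitute $f = H_0 + \sum_{j} H_j e^{\omega_j z^{t}}$ into (\ref{e1.4}), and compare the frequencies on the two sides: $f^{n}$ produces frequencies in the $n$-fold sumset of $W_0$, while $q e^{Q} f^{(k)}(z+c)$ produces the shifted frequencies $\{\beta + \omega_j\}$, with $\beta$ the leading coefficient of $Q$. The main obstacle is to show that the extreme points of the two convex hulls can be matched only if $m = 1$ and $\omega_1 z^{t} = \alpha$ with $\deg\alpha = \deg Q$, which forces $f \in \Gamma_1'$. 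Tracking how the $k$-th derivative perturbs the coefficients $H_j$ (while leaving the frequencies $\omega_j$ in place) and checking that the polynomial parts still balance is exactly where the analysis for (\ref{e1.4}) diverges from that for (\ref{e1.3}) and requires the most care.
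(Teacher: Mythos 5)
The paper never proves Theorem B directly: it is quoted from Liu's article and is recovered only as the special case $L(z,f)=f^{(k)}(z+c)$, all $a_i=0$, of Theorem \ref{t1.1}, so the right comparison is with the proofs of Theorem \ref{t1.1}(i)--(iii) and (v). Your overall architecture coincides with the paper's: the reduction $m\bigl(r,f^{(k)}(z+c)/f\bigr)=S(r,f)$ (Lemma \ref{l3}) and the inequality $(n-1)T(r,f)\le T(r,e^{Q})+S(r,f)$ for (a); Hadamard factorization plus a Clunie-type lemma (Lemma \ref{l6}) for (b) and (1); the Steinmetz normal form and convex hulls for (2). So the approach is the intended one, but two steps do not close.

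The genuine gap is in (1). After writing $f=\pi e^{v}$ with $P\equiv 0$ you arrive at $\pi^{n}e^{nv(z)-v(z+c)-Q(z)}=-qR_{k}$ and claim a Clunie-type inequality $(n-1)T(r,\pi)=S(r,\pi)$ that ``collapses $\pi$ to a polynomial''. But comparing $R_{k}$, a differential polynomial in $\pi(z+c)$, with $\pi(z)$ unavoidably leaves the factor $e^{v(z+c)-v(z)}$, an exponential of order $\deg Q-1$: it is $S(r,f)$ but it is \emph{not} $S(r,\pi)$ once $\rho(\pi)\le\deg Q-1$. The corresponding step in the paper (inequality (\ref{e3.11}) in the proof of Theorem \ref{t1.1}(iii), Case 2) therefore yields only $\lambda(f)\le\deg Q-1$, i.e.\ $\lambda(f)=\rho(f)-1$, and the paper deliberately weakens its conclusion to ``either $\lambda(f)=\rho(f)-1$ or $f\in\Gamma_0'$'' for exactly this reason; your argument delivers $f\in\Gamma_0'$ only when $\deg Q=1$, where $e^{v(z+c)-v(z)}$ is constant. (You also quietly assume $\lambda(f)<\deg Q$ before deriving it; the paper first gets $N(r,1/f)=S(r,f)$ from $f^{n-1}=-qe^{Q}L(z,f)/f$ via Lemma \ref{l6}.) Separately, your part (2) is a plan rather than a proof: the substance is the circumference comparison $C(co(X_1))=C(co(X_2))\ge C(co(X_3))=2C(co(W))$ played off against $N\bigl(r,1/F\bigr)$ computed from both sides of the equation, plus the degenerate case in which $0,\omega_1,\ldots,\omega_m$ are collinear; this is precisely where the paper's Lemmas \ref{l8} and \ref{l9} do all the work and where your sketch stops at naming the obstacle.
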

Recently, Liu-Mao-Zheng \cite{Liu-Mao-Zheng_OpenMath} considered $\Delta_cf(z)$ instead of $f(z+c)$ in (\ref{e1.3}) and proved that
\begin{theoC}\cite{Liu-Mao-Zheng_OpenMath}
	Under the same situation as in {\em Theorem A}, the finite order entire solutions $f$ of the equation \bea\label{e1.5}f(z)^n + q(z)e^{Q(z)}\Delta_cf(z) = P(z),\eea satisfies the results {\em (a), (b)} and
	\begin{itemize}
		\item [(1)] $\lambda(f) = \rho(f)-1$ if and only if $P(z) \equiv 0$. In particular, this is the case if $n \geq 3$.
		\item [(2)] If $n\geq3$ or $P(z) \equiv 0$, $f$ is of the form $f(z)=A(z)e^{\omega z^s}$, where $s=\deg{Q}$, $\omega$ is a nonzero constant and $A(z) (\not\equiv 0)$ is an entire function satisfying $\lambda(A) = \rho(A)=\deg{Q}-1$. In particular, if $\deg{Q}= 1$, then $A(z)$ reduces to a polynomial.
		\item [(3)] If $f$ is an exponential polynomial solution of (\ref{e1.5}) of the form (\ref{e1.2}), then f is of the form
		$$f(z) = H_0(z) + H_1(z)e^{\omega_1z},$$ where $H_1(z)$, $H_2(z)$ are non-constant polynomials and $\omega_1$ is a non-zero constant satisfying $e^{\omega_1c}=1$.
	\end{itemize}
\end{theoC}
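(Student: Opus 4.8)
The plan is to obtain $(a)$ and $(b)$ by transporting the Nevanlinna-theoretic arguments behind \emph{Theorem A} from $f(z+c)$ to $\Delta_cf(z)$, and then to extract the finer structure in $(1)$--$(3)$ from the Hadamard factorization of $f$ and, in the exponential-polynomial case $(3)$, from the Steinmetz normal form $(\ref{e1.2})$ together with convex-hull (Borel-type) bookkeeping. Throughout set $s=\deg Q$ and note first that $\Delta_cf\not\equiv0$: otherwise $f$ is $c$-periodic and $f^{n}=P$ would force $f$ to be a non-periodic polynomial, a contradiction. For $(a)$, applying the difference analogue of the lemma on the logarithmic derivative, $m(r,\Delta_cf/f)=S(r,f)$ and $T(r,f(z+c))=T(r,f)+S(r,f)$, to $(\ref{e1.5})$ written as $f^{n}=P-qe^{Q}\Delta_cf$ gives $nT(r,f)\le T(r,e^{Q})+m(r,\Delta_cf)+S(r,f)\le T(r,e^{Q})+T(r,f)+S(r,f)$, that is $(n-1)T(r,f)\le T(r,e^{Q})+S(r,f)$, while solving $(\ref{e1.5})$ for $e^{Q}=(P-f^{n})/(q\,\Delta_cf)$ yields the reverse bound $T(r,e^{Q})\le(n+2)T(r,f)+S(r,f)$. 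Since $T(r,e^{Q})=(1+o(1))\pi^{-1}|\alpha_s|\,r^{s}$ with $\alpha_s$ the leading coefficient of $Q$, these trap $T(r,f)$ between positive constant multiples of $r^{s}$, so $\rho(f)=s$ and $0<\tau(f)<\infty$. For $(b)$, the zeros of $f$ are, off the finitely many zeros of $q$, governed by $q(z)e^{Q(z)}\Delta_cf(z)=P(z)-f(z)^{n}$, and exactly as in \emph{Theorem A} a counting/second-main-theorem argument gives $\lambda(f)=\rho(f)$ iff $P\not\equiv0$.

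\emph{Part $(1)$.} Write the Hadamard factorization $f=\Pi\,e^{h}$, where $\Pi$ is the canonical product over the zeros of $f$ (so $\rho(\Pi)=\lambda(f)$) and $\deg h\le s$. Assume $P\equiv0$, so $(\ref{e1.5})$ reads $f^{n}=-qe^{Q}\Delta_cf$; if $\lambda(f)<s$ then $\deg h=s$, and with $h=\beta z^{s}+\cdots$, $Q=\alpha_s z^{s}+\cdots$ one obtains after cancelling $e^{Q+h}$
\[
\Pi(z)^{n}e^{(n-1)h(z)-Q(z)}=-q(z)\bigl(\Pi(z+c)e^{\,h(z+c)-h(z)}-\Pi(z)\bigr),
\]
where $h(z+c)-h(z)$ is a polynomial of degree $s-1$ with nonzero leading coefficient $\beta s c$. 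Comparing the degree-$s$ part of the exponents forces the resonance $(n-1)\beta=\alpha_s$, so $\deg\bigl((n-1)h-Q\bigr)\le s-1$; a second comparison, now of the order-$(s-1)$ growth, rules out $\lambda(f)<s-1$ (the right side is genuinely of order $s-1$ through $e^{\,h(z+c)-h(z)}$, while the left side would be of order $<s-1$), and hence $\lambda(f)=s-1=\rho(f)-1$. Conversely $\lambda(f)=\rho(f)-1<\rho(f)$ (as $\rho(f)=s\ge1$), so $(b)$ forces $P\equiv0$; and for $n\ge3$ the same factorization constraints are incompatible with $P\not\equiv0$ (the proof of \emph{Theorem A}$(c)$ carries over verbatim to $\Delta_c$), so $P\equiv0$ is automatic.

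\emph{Part $(2)$.} Under $n\ge3$ or $P\equiv0$ we are in the regime $\lambda(f)=s-1$ just treated, with $\beta=\alpha_s/(n-1)$. Writing $h=\beta z^{s}+h_1$ with $\deg h_1\le s-1$ and setting $\omega=\beta$, $A=\Pi e^{h_1}$, I read off $f=A\,e^{\omega z^{s}}$ with $\lambda(A)=\lambda(\Pi)=s-1$ and $\rho(A)=\max(\rho(\Pi),\deg h_1)=s-1$. When $s=1$ the displayed identity collapses to an algebraic difference equation $\Pi^{n}e^{R}=-q\bigl(\Pi(z+c)e^{\beta c}-\Pi(z)\bigr)$ with $R$ and $\beta c$ constant, and a comparison of characteristic functions gives $T(r,\Pi)=S(r,\Pi)$; hence $\Pi$, and so $A$, is a polynomial.

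\emph{Part $(3)$.} Here I would insert the normalized exponential polynomial $(\ref{e1.2})$ (necessarily with $t=s$, since $\rho(f)=s$) into $(\ref{e1.5})$: then $f^{n}$ is an exponential polynomial whose leading frequencies run over the $n$-fold sumset of $W_0$, while $qe^{Q}\Delta_cf$ carries frequencies in the translate $\alpha_s+W_0$ and $P$ has frequency $0$. As the convex hull of the $n$-fold sumset equals $n\,co(W_0)$, each extreme frequency $n\bar\omega_j$ is produced only by the pure power $H_j^{n}\not\equiv0$, and must therefore coincide with some $\alpha_s+\bar\omega_k$; matching the vertices of the dilation $n\,co(W_0)$ with those of the translate $\alpha_s+W_0$ (a rigid arithmetic-geometric constraint, controlled by Borel's theorem and the circumference $C(co(\cdot))$) forces a single nonzero frequency and $t=s=1$, i.e. $f=H_0(z)+H_1(z)e^{\omega_1 z}$ with polynomial coefficients; finally, the coefficient $e^{\omega_1 c}H_1(z+c)-H_1(z)$ generated by $\Delta_c$ in the surviving frequency class must close the polynomial identity without creating spurious growth, which is exactly what forces $e^{\omega_1 c}=1$. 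The routine portions are $(a)$, $(b)$ and the forward direction of $(1)$; the genuine obstacle is $(3)$, where everything rests on the combinatorics of the frequency sets---proving that the vertices of $n\,co(W_0)$ can meet $\alpha_s+W_0$ only in the degenerate configuration with one nonzero frequency and $t=1$. That convex-hull bookkeeping, rather than any single estimate, is where I expect the real work to lie.
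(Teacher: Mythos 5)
First, a framing point: the paper does not prove Theorem C itself --- it is quoted from Liu--Mao--Zheng and is subsumed by Theorem \ref{t1.1}, so the relevant comparison is with the proof of Theorem \ref{t1.1} specialized to $L(z,f)=\Delta_cf(z)$. Your overall strategy (difference analogue of the lemma on the logarithmic derivative for (a), Hadamard factorization plus Borel's theorem for (1)--(2), Steinmetz normal form plus convex-hull bookkeeping for (3)) is the same as the paper's, and (a), the forward direction of (1), and most of (2) are essentially sound. But there is a genuine gap in part (1): to rule out $\lambda(f)<s-1$ you compare orders of growth of the two sides of
\[
\Pi(z)^{n}e^{(n-1)h(z)-Q(z)}=-q(z)\bigl(\Pi(z+c)e^{h(z+c)-h(z)}-\Pi(z)\bigr)
\]
and assert that the left side ``would be of order $<s-1$.'' The resonance only gives $\deg\bigl((n-1)h-Q\bigr)\le s-1$; if equality holds, the left side has order exactly $s-1$, the same as the right side, and no contradiction follows from orders alone. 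The correct comparison is of zero-counting functions: the left side has $N(r,1/\cdot)=nN(r,1/\Pi)+O(\log r)=o(r^{s-1})$, whereas the right side, written as $-q(z)\Pi(z)(G(z)-1)$ with $G=\frac{\Pi(z+c)}{\Pi(z)}e^{h(z+c)-h(z)}$, satisfies $N(r,1/\cdot)\geq Kr^{s-1}$ for some $K>0$ by the second main theorem applied to $G$. This counting-function estimate is precisely the role of inequality (\ref{e3.11}) in the paper's proof of Theorem \ref{t1.1}(iii); your argument skips it.

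The second and larger gap is part (3), which you outline but explicitly decline to carry out. The vertex-matching between $n\,co(W_0)$ and the translate of $W_0$ by the leading coefficient of $Q$ is not a routine afterthought: in the paper it is the content of Lemmas \ref{l8}--\ref{l10}, which combine Borel's theorem (Lemma \ref{l5}) with Steinmetz's circumference formula (Lemma \ref{l7}) through an extended case analysis (collinear versus non-collinear frequencies, $\mathcal{A}_0\equiv0$ or not, $H_0\equiv0$ or not, and the chain $C(co(X_1))=C(co(X_2))\ge C(co(X_3))>C(co(W_0))$). Without that work the single-frequency conclusion, the reduction to $t=1$, and the condition $e^{\omega_1c}=1$ remain assertions. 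Two smaller points: in part (2) the claim $T(r,\Pi)=S(r,\Pi)$ does not follow from a bare comparison of characteristics when $n=2$ (you get $2T\le 2T+S$); you must first divide by $\Pi$ and invoke $m\bigl(r,\Pi(z+c)/\Pi(z)\bigr)=S(r,\Pi)$. And part (b) together with the ``$n\ge3$'' clause of (1) are asserted by reference to Theorem A rather than argued; the paper establishes the latter by a second-main-theorem estimate in the proof of Theorem \ref{t1.1}(iv).
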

In 2017, Li-Yang \cite{Li-Yang_jmaa_2017} considered the following form of equation \bea\label{e1.6}f^n(z)+a_{n-1}f^{n-1}(z)+\cdots+a_1f(z)+q(z)e^{Q(z)}f(z+c)=P(z),\eea where $a_i\in\mathbb{C}$ and proved the following results.
\begin{theoD}\cite{Li-Yang_jmaa_2017}
	Under the same situation as in {\em Theorem A}, the finite order entire solutions $f$ of equation (\ref{e1.6}) satisfies the results {\em (a), (d)} and
	\begin{itemize}
		\item [(1)] If zero is a Borel exceptional value of $f(z)$, then we have $a_{n-1}=\cdots=a_1=P(z)\equiv 0$.
		\item [(2)] If $P(z)\equiv 0$, then we have $z^{n-1}+a_{n-1}z^{n-2}+\cdots+a_1=(z+a_{n-1}/n)^{n-1}$. Furthermore, if there exists $i_0\in\{1,\ldots, n-1\}$ such that $a_{i_0}=0$, then all of the $a_j (j=1,\ldots, n-1)$ must be zero as well and we have $\lambda(f) <\rho(f)$; otherwise we have $\lambda(f) =\rho(f)$.
		\item [(3)] A solution $f$ belongs to $\Gamma_0$ if and only if $P(z) \equiv 0$ and there exists an $i_0\in\{1, \ldots, n-1\}$ such that $a_{i_0}=0$.
		\item [(4)] When $n \geq 3$, if there exists an $i_0\in\{1, \ldots, n-1\}$ such that $a_{i_0}=0$ and $ card \{z: p(z) =p'(z) =p''(z) =0\} \geq 1$ or $ card \{z: p(z) =p'(z) =0\} \geq 2$, where $p(z) =z^n+a_{n-1}z^{n-1}+\cdots+a_1z$, then $f$ belongs to $\Gamma_0$ and $a_{n-1}=\cdots=a_1= 0\equiv P(z)$.
	\end{itemize}
\end{theoD}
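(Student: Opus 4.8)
The plan is to combine Nevanlinna value-distribution theory with the structure theory of exponential polynomials (Steinmetz's normal form together with Borel's lemma on the linear independence of the $e^{\text{polynomial}}$), using throughout the finite-order difference estimate $T(r,f(z+c))=T(r,f)+S(r,f)$. I abbreviate $p(z)=z^{n}+a_{n-1}z^{n-1}+\cdots+a_{1}z$, so that the equation reads $p(f)=P-q e^{Q}f(z+c)$. I would first settle (a). Writing $q e^{Q}=\bigl(P-p(f)\bigr)/f(z+c)$ and applying the first main theorem gives $T(r,e^{Q})=m(r,e^{Q})\le (n+1)\,T(r,f)+S(r,f)$; since $T(r,e^{Q})\sim \kappa r^{\deg Q}$ this yields $\deg Q\le\rho(f)$, and the reverse inequality together with the mean-type assertion follows exactly as in \emph{Theorem A}, the point being that the extra terms $a_if^{i}$ with $i<n$ are subordinate to $f^{n}$ in every growth estimate. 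Assertion (d) I would not prove directly: once (3) is available, $f\in\Gamma_{0}$ forces $P\equiv0$ and $a_{1}=\cdots=a_{n-1}=0$, so the equation collapses to $g^{n}+qe^{Q}g(z+c)=0$ for every solution $g$, and \emph{Theorem A}(d) applies verbatim.

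The heart of the matter is (1). Assuming $0$ is Borel exceptional, i.e. $\lambda(f)<\rho(f)=\deg Q$, Hadamard's factorization gives $f=\pi e^{\beta}$ with $\deg\beta=s:=\deg Q$ and $\rho(\pi)=\lambda(f)<s$. Substituting, the equation becomes a linear combination of the exponentials $e^{i\beta}$ ($1\le i\le n$) and $e^{Q+\beta(z+c)}$ with coefficients $a_i\pi^{i}$ and $q\,\pi(z+c)$ of order $<s$, plus the polynomial $-P$. Since the leading coefficients $ib_{s}$ of the exponents $i\beta$ are pairwise distinct, Borel's lemma forces the delay exponent $Q+\beta(z+c)$ to coincide, up to an additive constant, with some $i_{0}\beta$; as $q\,\pi(z+c)\not\equiv0$ it cannot stand alone, and as $\pi^{n}\not\equiv0$ the only admissible pairing is $i_{0}=n$ (otherwise the lone term $e^{n\beta}$ could not be cancelled). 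The remaining exponentials $e^{i\beta}$, $1\le i\le n-1$, then have no partner, so each coefficient $a_i\pi^{i}$ vanishes, giving $a_{1}=\cdots=a_{n-1}=0$; the surviving group at $e^{n\beta}$ has positive order and can equal the polynomial $P$ only if its coefficient, hence $P$, vanishes. This proves (1).

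For (2) I would exploit the zero-propagation built into $p(f)=-qe^{Q}f(z+c)$ when $P\equiv0$: since $p(0)=0$, every zero $z_{0}$ of $f$ forces $f(z_{0}+c)=0$, and comparing multiplicities gives $\mathrm{ord}_{z_{0}+c}f=\ell\,\mathrm{ord}_{z_{0}}f$ with $\ell=\min\{i:a_i\neq0\}$. If $\ell\ge2$, the multiplicities grow geometrically along $z_{0},z_{0}+c,z_{0}+2c,\dots$, contradicting $n(r,1/f)=O(r^{s})$; hence either $f$ has no zeros (so $\lambda(f)<\rho(f)$, and (1) yields $a_1=\cdots=a_{n-1}=0$) or $a_1\neq0$ and $\lambda(f)=\rho(f)$. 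In the latter case I would analyse the $\xi$-point distribution for the roots $\xi$ of $r(z)=p(z)/z$ through the same shift relation and the second main theorem; consistency of these value correspondences forces $r$ to have a single zero of multiplicity $n-1$, which is the perfect-power factorization asserted in (2), its location being read off from the subdominant coefficient. If moreover some $a_{i_0}=0$, a perfect power with a vanishing intermediate coefficient must reduce to $z^{n-1}$, contradicting $a_1\neq0$ and pushing us back to the zero-free branch, whence all $a_i=0$ and $\lambda(f)<\rho(f)$.

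Finally, (3) and (4) are bookkeeping over (1) and (2). For (3), $f\in\Gamma_{0}$ means $f=e^{\alpha}$ has no zeros, so $0$ is Borel exceptional and (1) gives $P\equiv0$ with all $a_i=0$; conversely $P\equiv0$ plus some $a_{i_0}=0$ gives, by (2), the reduced equation $f^{n}+qe^{Q}f(z+c)=0$, whence $f\in\Gamma_{0}$ by \emph{Theorem A}(c). For (4), the hypotheses $card\{z:p=p'=p''=0\}\ge1$ or $card\{z:p=p'=0\}\ge2$ supply enough multiple values of $p$: assuming $P\not\equiv0$ one has $\lambda(f)=\rho(f)$, and counting the corresponding multiple $a$-points of $f$ against the second main theorem for these small targets produces a deficiency violating the defect relation, forcing $P\equiv0$, after which (3) concludes. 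I expect the genuine difficulty to lie in (2) and (4): quantifying the zero- and value-distribution obstructions precisely enough—controlling multiplicities under the shift by $c$ and balancing them against the Nevanlinna counting functions—is where the real work sits, whereas (1), (3) and (d) reduce cleanly to Borel's lemma and the already-established \emph{Theorem A}.
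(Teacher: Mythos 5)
This is a cited result (Li--Yang), and the paper reproves its generalization in Theorem \ref{t1.1}(ii)--(iv) by a route built on the Tumura--Clunie-type Lemma \ref{l6}: one first shows from the equation itself that $T\left(r,f(z+c)/f\right)=S(r,f)$ (the quotient equals $-(f^{n-1}+\cdots+a_1)/(qe^{Q})$, so its poles are confined to the zeros of $q$ and its proximity function is small by Lemma \ref{l1}), deduces $\ol N\left(r,1/R\right)=S(r,f)$ for $R=p(f)-P$ resp.\ $R=p(f)/f$ via Lemma \ref{l4}, and then Lemma \ref{l6} delivers both conclusion (1) and the perfect-power identity of (2) in one stroke; the dichotomy $\lambda(f)<\rho(f)$ versus $\lambda(f)=\rho(f)$ then falls out of the second main theorem applied to $f$ with the two targets $0$ and $-a_{n-1}/(n-1)$, whose counting functions have just been shown to be small. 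Your treatment of (1) by Hadamard factorization plus Borel's lemma (Lemma \ref{l5}) is a correct and genuinely different route, provided you group the exponents by their leading coefficients before invoking the lemma (two exponents may share a leading coefficient while their difference has degree $t-1<\rho(\pi)$, so the lemma cannot be applied to the ungrouped sum); and your reductions of (d) and of the converse half of (3) to Theorem A are clean shortcuts.

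The genuine gaps are in (2) and (4). The multiplicity-propagation argument is not airtight: the correct recursion is $\mathrm{ord}_{z_0+c}f=\ell\,\mathrm{ord}_{z_0}f-\mathrm{ord}_{z_0}q$, so an orbit can terminate when it meets a zero of $q$, and the zeros of $f$ whose forward orbits die this way lie in an infinite union of arithmetic progressions, so ``$\ell\ge2$ implies $f$ is zero-free'' needs repair. More seriously, the two load-bearing conclusions of (2) are asserted rather than derived: (a) the perfect-power factorization of $p(z)/z$ --- ``consistency of these value correspondences forces'' it is not an argument; the actual mechanism (for each nonzero root $\xi_j$ of $p$ of multiplicity $m_j$ one has $m_jN(r,\xi_j;f)\le N(r,0;f(z+c))+S(r,f)=N(r,0;f)+S(r,f)$ by Lemma \ref{l4}, so when $a_1\neq0$ every such $\xi_j$ is deficient and the second main theorem forbids two distinct ones) must be written out; and (b) the claim $\lambda(f)=\rho(f)$ in the ``all $a_i\neq0$'' branch does not follow from $f$ merely possessing zeros --- it requires the second-main-theorem step just described. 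Likewise in (4) the decisive inequalities --- $\ol N\left(r,1/p(f)\right)\le(n-2)\,T(r,f)+S(r,f)$ because the cardinality hypotheses force $p$ to have at most $n-2$ distinct roots, and $\ol N\left(r,1/(p(f)-P)\right)\le T(r,f)+S(r,f)$ from the equation, which together contradict $nT(r,f)=T(r,p(f))$ --- are compressed into ``produces a deficiency violating the defect relation''; the count that makes this work is precisely the content of the proof and cannot be omitted.
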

In the same paper, Li-Yang \cite{Li-Yang_jmaa_2017} also proved the following result.
\begin{theoE}\cite{Li-Yang_jmaa_2017}
	If $f$ is an exponential polynomial solution of the form (\ref{e1.2}) of the equation (\ref{e1.6}) for $n=2$ and $a_1\neq 0$, then the following conclusions hold.
	\begin{itemize}
		\item [(1)] when $m\geq 2$, there exists $i,j\in\{1,2,\ldots,m\}$ such that $\omega_i=2\omega_j$.
		\item [(2)] when $m =1$, then $f\in\Gamma_1$. Moreover, if $f\in\Gamma_1\backslash\Gamma_0$, then $\rho(f) =1$, $f(z)=Ke^{\frac{1}{c}(2k\pi i-\log\frac{2d+a_1}{d}}$, $Q(z)=\frac{1}{c}(2k\pi i-\log\frac{2d+a_1}{d})z$, $q(z)=-\frac{2d+a_1}{d}$ and $d^2+a_1d =P(z)$, where $K, d \in\mathbb{C}\backslash\{0\}$ and $k\in\mathbb{Z}$.
	\end{itemize}
\end{theoE}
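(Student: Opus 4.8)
The plan is to substitute the Steinmetz normal form (\ref{e1.2}) into (\ref{e1.6}) with $n=2$ and extract information by comparing the leading exponents (the ``frequencies'') of the exponential terms on the two sides. Since (\ref{e1.6}) fits the framework of \emph{Theorem A}, I first record that $\rho(f)=\deg Q=:t$, so the exponents in (\ref{e1.2}) really are of the form $\omega_j z^{t}$. Writing $\beta$ for the leading coefficient of $Q$, $\omega_0:=0$, $H_0$ for the ordinary-polynomial part, and $\Omega_0:=\{0,\omega_1,\dots,\omega_m\}$, the expansion of $f^{2}+a_1f+q e^{Q}f(z+c)=P$ produces exponential terms whose leading frequencies are $\{\omega_i+\omega_j:0\le i\le j\le m\}$ from $f^{2}$, the set $\Omega_0$ from $a_1f$, and $\beta+\Omega_0$ from $q e^{Q}f(z+c)$, while $P$ supplies only the frequency $0$. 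The basic tool throughout is the Borel-type linear independence of the functions $e^{\mu z^{t}}$ having pairwise distinct leading coefficients $\mu$: it forces the aggregate coefficient (a function of order $<t$) attached to each leading frequency to vanish, so the argument reduces to combinatorics of these frequency sets together with a handful of coefficient identities.

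For assertion (1), with $m\ge2$, I would argue by contradiction, assuming $2\omega_j\ne\omega_l$ for all $j,l$. The geometric observation is that the convex hull of the $f^{2}$-frequencies equals $2\,co(\Omega_0)$, whose vertices are precisely the points $2v$ with $v$ a vertex of $co(\Omega_0)$, and at such a vertex the representation $2v=\omega_a+\omega_b$ is unique. Hence for a nonzero vertex $v=\omega_j$ of $co(\Omega_0)$ the only $f^{2}$-term at frequency $2\omega_j$ is $H_j^{2}e^{2\omega_j z^{t}}$, whose coefficient $H_j^{2}\not\equiv0$; since $2\omega_j\ne0$ and, by assumption, $2\omega_j\notin\Omega_0$, it cannot be cancelled by $a_1f$ or $P$, so it must be matched by the delay term, i.e.\ $2\omega_j\in\beta+\Omega_0$, which says $2\omega_j-\beta$ is again a vertex of $co(\Omega_0)$. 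Thus the affine dilation $\phi(\omega)=2\omega-\beta$ (a blow-up by factor $2$ about $\beta$) would carry the finite vertex set of $co(\Omega_0)$ into itself; taking the vertex farthest from $\beta$ gives $|\phi(v)-\beta|=2|v-\beta|>|v-\beta|$, a contradiction. This yields $\omega_l=2\omega_j$, which is (1). The degenerate configurations---where the extreme vertex is $0$, so that the coefficient $-H_0^{2}-a_1H_0+P$ might accidentally vanish, or where several sums $\omega_i+\omega_j$ collide---I would dispose of by short direct arguments, since they do not disturb the extremal mechanism.

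For assertion (2), set $m=1$ and write $f=H_0+H_1e^{\omega_1 z^{t}}$ with $H_1\not\equiv0$. Collecting by leading frequency gives contributions at $0,\omega_1,2\omega_1$ from $f^{2}+a_1f$ and at $\beta,\beta+\omega_1$ from the delay term. The isolated top frequency $2\omega_1$ has coefficient $H_1^{2}\not\equiv0$ and can only be absorbed by the delay term, which (after excluding $\beta=2\omega_1$, as that would leave $\beta+\omega_1=3\omega_1$ uncancelled) forces $\beta=\omega_1$. Next I would show $H_0\not\equiv0$: if $H_0\equiv0$ the delay term has no frequency-$\omega_1$ part and $a_1H_1e^{\omega_1z^{t}}$ survives uncancelled, impossible because $a_1\ne0$---the first use of the hypothesis. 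To pin down $t$, I compare the two exponent discrepancies governing the $\omega_1$- and $2\omega_1$-blocks; their difference is $\omega_1\big((z+c)^{t}-z^{t}\big)$, which is non-constant once $t\ge2$, so the two blocks cannot both collapse into single exponentials. One block must therefore split into independent terms; the $2\omega_1$-block cannot (it would force $H_1\equiv0$), so the $\omega_1$-block splits, giving simultaneously $(2H_0+a_1)H_1\equiv0$ and $qH_0(z+c)\equiv0$; the former makes $H_0\equiv-a_1/2\ne0$ while the latter makes $H_0\equiv0$, a contradiction. Hence $t=1$ and $\rho(f)=1$.

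With $t=1$ and $Q(z)=\omega_1z+b$, the three frequency blocks become the polynomial identities $H_0^{2}+a_1H_0=P$, $(2H_0+a_1)H_1+qH_0(z+c)e^{b}=0$, and $H_1^{2}+qH_1(z+c)e^{b+\omega_1c}=0$. Reading the last as $q(z)H_1(z+c)=-H_1(z)^{2}e^{-(b+\omega_1c)}$ and tracking roots forces $H_1$ to be a nonzero constant, whence $q$ is constant; feeding this into the middle and then the first identity should yield $H_0\equiv d$, $P=d^{2}+a_1d$, $q=-(2d+a_1)/d$, and the compatibility $e^{\omega_1c}=d/(2d+a_1)$, i.e.\ $\omega_1=\tfrac1c\big(2k\pi i-\log\tfrac{2d+a_1}{d}\big)$ for some $k\in\mathbb{Z}$, exhibiting $f=d+e^{Q(z)}\in\Gamma_1\setminus\Gamma_0$. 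I expect the genuine obstacle to be exactly this final reduction, namely proving that the polynomial coefficients $H_0,H_1$ must in fact be constant. The root-tracking for $H_1$ is clean, but the constancy of $H_0$ is delicate: from $(2H_0+a_1)H_1+qH_0(z+c)e^{b}=0$ one must confront a potential nonconstant-$H_0$ branch tied to the boundary relation $e^{\omega_1c}=\tfrac12$ (the $d\to\infty$ degeneration of $e^{\omega_1c}=d/(2d+a_1)$), which lies outside the displayed normal form and must be addressed before the stated conclusion can be asserted.
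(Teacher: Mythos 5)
The paper does not reprove Theorem E; it quotes it from Li--Yang and instead proves the generalization Theorem \ref{t1.1}(v) via Lemmas \ref{l8}--\ref{l10}, so I compare your argument with those. For assertion (1) your route is genuinely different. The paper's Lemma \ref{l9} computes $N\left(r,\frac{1}{F}\right)$ for $F=f^2+a_1f-P=-qe^{Q}L(z,f)$ in two ways using Steinmetz's value-distribution theorem (Lemma \ref{l7}) and derives a contradiction from circumference inequalities such as $C(co(X_2))\geq C(co(X_3))=2C(co(W))$, with the collinear configuration split off into Lemma \ref{l8} and handled by a Borel-type argument. You stay entirely at the level of Borel (Lemma \ref{l5}) plus convex geometry: at a nonzero vertex $v$ of $co(\Omega_0)$ the frequency $2v$ is uniquely represented, forcing $2v-\beta\in\Omega_0$, and the dilation $\phi(\omega)=2\omega-\beta$ applied to the farthest vertex gives a contradiction. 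This is more elementary (no counting functions), but the degenerate case you defer --- the point of $\Omega_0$ farthest from $\beta$ being $0$, where the frequency-$0$ coefficient $H_0^2+a_1H_0-P$ can vanish without help from the delay term --- is not a routine patch: all nonzero vertices may then sit inside the disk $\overline{D(\beta,|\beta|/2)}$ and the extremal iteration has nowhere to go. This configuration is essentially what drives the paper to the circumference comparison, so you should not expect a ``short direct argument'' to close it.

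For assertion (2) your derivation of $\beta=\omega_1$, of $H_0\not\equiv 0$ (the one place $a_1\neq 0$ enters), and of $t=1$ is in substance the paper's Lemma \ref{l10}, Case 3: the three frequency identities (\ref{e2.16})--(\ref{e2.18}), the deduction that $H_0$ is a polynomial from (\ref{e2.18}), the representation $H_1=\beta(z)e^{Q_{t-1}(z)}$ from (\ref{e2.16}), and the observation that the factor $e^{\omega_1((z+c)^t-z^t)}$ of degree $t-1$ in (\ref{e2.17}) forbids $t\geq 2$. Your phrasing ``one block must split'' blurs the necessary intermediate step (first reduce $H_0$, $H_1$ to polynomials times $e^{Q_{t-1}}$, then apply Borel at level $t-1$), but the mechanism is the same.

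The most valuable part of your write-up is the final caveat, and you are right to refuse to assert the displayed normal form of Theorem E(2). The nonconstant-$H_0$ branch you isolate, tied to $e^{\omega_1 c}=\tfrac12$, is not a phantom: from $H_1$ constant and $2H_0(z)+a_1=e^{-\omega_1c}H_0(z+c)$ one gets $H_0(z)=\frac{a_1}{2c}z+e_0$ whenever $e^{\omega_1c}=\tfrac12$, and this is realized by genuine solutions --- for instance $f(z)=z+2^{-z}$ satisfies $f^2+2f-2e^{-z\log 2}f(z+1)=z^2+2z$, an equation of the form (\ref{e1.6}) with $n=2$, $a_1=2\neq 0$, yet $f\notin\Gamma_1$. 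This is precisely why the present paper's Theorem \ref{t1.1}(v)(b)(I) retreats to the conclusion $f\in\Gamma_1'$ with \emph{polynomial} coefficients and never reasserts the constant form of Theorem E(2). So the obstruction you flag is a defect of the quoted statement, not of your argument; the actual gaps on your side are the degenerate case in (1) and the unproved uniqueness of the constant branch in (2), which cannot be proved because it is false.
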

We now introduce the generalized linear delay-differential operator of $f(z)$, \bea\label{e1.7}L(z,f)=\sum_{i=0}^{k}b_if^{(r_i)}(z+c_i)\;(\not\equiv 0),\eea where $b_i,c_i\in\mathbb{C}$, $r_i$ are non-negative integers, $c_0=0$,  $r_0=0$. In view of the above theorems it is quiet natural to characterize the nature of exponential polynomial as solution of certain non-linear complex equation involving generalized linear delay-differential operator. In this regard, we consider the following non-linear delay-differential equation \bea\label{e1.8} f^{n}(z)+\sum_{i=1}^{n-1}a_{i}f^{i}(z)+q(z)e^{Q(z)}L(z,f)=P(z),\eea where $a_i\in\mathbb{C}$, $n$ be non-negative integers; $q$, $Q$, $P$ respectively be non-zero, non-constant, any polynomials. We also introduce, for any polynomials $p_i(z)$ and non-constant polynomials $\alpha_i(z)$, a new class of solution as follows: \beas \Gamma_2' &=&\{p_1(z)e^{\alpha_1(z)}+p_2(z)e^{\alpha_2(z)}+p_3(z) \}\eeas Now we are at a state to present our main result which improves all the above mentioned results as follows:
\begin{theo}\label{t1.1}
	Under the same situation as in {\em Theorem A}, the finite order entire solutions $f$ of equation (\ref{e1.8}) satisfies
	\begin{itemize}
		\item [(i)] Every solution $f$ satisfies $\rho(f) = \deg Q$ and is of mean type.
		\item [(ii)] If zero is a Borel exceptional value of $f(z)$, then we have $a_{n-1}=\cdots=a_1=P(z)\equiv 0$. Conversely, if $P(z) \equiv 0$ and there exists an $i_0\in\{1, \ldots, n-1\}$ such that $a_{i_0}=0$, then all of $a_j$'s $(j=1,\ldots, n-1)$ must be zero and we have $\lambda(f) <\rho(f)$; otherwise we have $\lambda(f) =\rho(f)$.
		\item [(iii)] If a solution $f$ belongs to $\Gamma_0'$, then $a_{n-1}=\cdots=a_1=P(z)\equiv 0$. Conversely, let $P(z) \equiv 0$ and there exists an $i_0\in\{1, \ldots, n-1\}$ such that $a_{i_0}=0$, then either $\lambda(f)=\rho(f)-1$ for $c_i=c_j$, $1\leq i,j\leq k$ or $f$ belongs to $\Gamma_0'$.
		\item [(iv)] Let $n \geq 3$. If at least one $a_{i_0}=0$ $(i_0=1,2, \ldots, n-1)$ and $p(z) =z^n+a_{n-1}z^{n-1}+\cdots+a_1z$ such that $ card \{z: p(z) =p'(z) =p''(z) =0\} \geq 1$ or $ card \{z: p(z) =p'(z) =0\} \geq 2$, then $P(z)\equiv 0=a_{n-1}=\cdots=a_1= 0$ and $f\in\Gamma_0'$. Moreover, $ card \{z: p(z) =p'(z) =0\} \geq 2$ is not possible.
		\item [(v)] Let $f$ be given by (\ref{e1.2}), which is a solution of (\ref{e1.8}) for $n=2$ and $a_1\neq 0$. Then the following conclusions hold:
			\begin{itemize}
				\item [(a)] when $m\geq 2$, there exists $i,j\in\{1,2,\ldots,m\}$ such that $\omega_i=2\omega_j$. In this case, $f\in\Gamma_2'$.
				\item [(b)] when $m=1$, then $f$ takes the form $f(z) = H_0(z) + H_1(z)e^{\omega_1z^t}$, i.e., $f\in\Gamma_1'$. In this case,
				 \begin{itemize}
				 	\item [(I)] either $t=1$, $\rho(f)=1$ and $H_0(z)$, $H_1(z)$ are polynomials and $Q(z)$ is a polynomial of degree $1$
				 	\item [(II)] or $H_0(z)=-\frac{a_1}{2}$, $P(z)=-\frac{a_1^2}{4}$, $H_1^2(z)=\frac{b_0a_1}{2}q(z)e^{Q_{t-1}(z)}$ and $L(z,f)=b_0H_0(z)$
				 	\item [(III)] or $H_0(z)=-\frac{a_1}{2}$, $P(z)=-\frac{a_1^2}{4}$, $H_1^2(z)=-q(z)e^{Q_{t-1}(z)}\mathcal{A}_1(z)$ and $L(z,f)=\mathcal{A}_1(z)e^{\omega_1z^t}$, where  	 $\mathcal{A}_1(z)=\sum_{i=0}^{k}b_i\tilde{H}_1(z+c_i)e^{\omega_1(z+c_i)^t-\omega_1z^t}$ such that $\tilde{H}_1(z+c_i)$ are the delay-differential polynomial of $H_1(z)$.
				 \end{itemize}
			\end{itemize}
	\end{itemize}
\end{theo}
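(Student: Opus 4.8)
The plan is to dispose of the five parts in turn, in each case inserting the Steinmetz normal form (\ref{e1.2}) of $f$ into (\ref{e1.8}) and reading off the constraints imposed on the exponential terms by Borel's theorem together with a convex-hull analysis of the exponents. I would first record two standing facts for finite-order entire $f$: that $T(r,f(z+c_{i}))=T(r,f)+S(r,f)$ and $m(r,f^{(r_{i})}/f)=S(r,f)$, so that $m(r,L(z,f))\le m(r,f)+S(r,f)$; and that, writing $Q(z)=\lambda z^{s}+\cdots$ with $s=\deg Q$, the term $qe^{Q}L(z,f)$ carries $e^{Q}$ as its dominant exponential factor.

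For (i) I would compare proximity functions in the rearrangement $f^{n}=P-\sum_{i}a_{i}f^{i}-qe^{Q}L(z,f)$: the left-hand side contributes $n\,m(r,f)$, while the right-hand side is bounded by $m(r,e^{Q})+m(r,f)+S(r,f)$, and the dominance of the $e^{Q}$-term supplies the reverse estimate; together these force $\rho(f)=\deg Q$, and the standard control of $T(r,f)/r^{\deg Q}$ excludes minimal and maximal type, giving mean type. For (ii) and (iii) the pivotal remark is that a Borel exceptional value $0$ (in particular membership in $\Gamma_{0}'$) yields the Hadamard factorization $f=ge^{h}$ with $\rho(g)=\lambda(f)<\rho(f)=\deg h=s$. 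Substituting, the summands carry leading exponents $nh,(n-1)h,\ldots,h,0$ together with $Q+h$ from the $e^{Q}L$-term; with $h=\mu z^{s}+\cdots$ these have leading coefficients $n\mu,\ldots,\mu,0$ and $\lambda+\mu$. Borel's theorem forces the top exponent $n\mu$ to be matched, which is possible only through $\lambda=(n-1)\mu$ so that $f^{n}$ balances the $e^{Q}L$-term; the intermediate exponents $i\mu$ $(1\le i\le n-1)$ are then pairwise distinct and unmatched, whence every $a_{i}=0$ and $P\equiv0$. The converse assertions, separating the alternative $\lambda(f)=\rho(f)-1$ (case $c_{i}=c_{j}$) from $f\in\Gamma_{0}'$, I would obtain by factoring $f\,\tilde p(f)=-qe^{Q}L(z,f)$ with $\tilde p(z)=z^{n-1}+\cdots+a_{1}$ and tracking the zeros through equal versus unequal shifts, following the scheme of \emph{Theorem D}. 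Part (iv) specializes this factorization for $n\ge3$: the hypotheses on $card\{z:p=p'=p''=0\}$ and $card\{z:p=p'=0\}$ are exactly the configurations in which $p(z)$ cannot decompose compatibly with $f=ge^{h}$ unless $p(z)=z^{n}$, returning $a_{n-1}=\cdots=a_{1}=0$, $P\equiv0$, $f\in\Gamma_{0}'$, while the impossibility of two double zeros is read from the contradiction in the resulting exponent count.

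The substantial work is (v), where $n=2$. Here $f^{2}$ contributes the exponents $\omega_{i}+\omega_{j}$, whereas $qe^{Q}L(z,f)$ contributes $Q+\omega_{\ell}$ to leading order, since shifts and derivatives of $H_{\ell}e^{\omega_{\ell}z^{t}}$ preserve the leading exponent $\omega_{\ell}z^{t}$ up to factors of order $<t$. For $m\ge2$ I would set up $W_{0}$ and its convex hull and demand that every exponential term cancel against the polynomial $P$; the extremal-vertex analysis of $co(W_{0})$ then produces a pair with $\omega_{i}=2\omega_{j}$ and pins the shape $f\in\Gamma_{2}'$. For $m=1$, writing $f=H_{0}+H_{1}e^{\omega_{1}z^{t}}$, I would expand $L(z,f)=B_{0}(z)+\mathcal{A}_{1}(z)e^{\omega_{1}z^{t}}$, where $B_{0}$ collects the exponent-$0$ part and $\mathcal{A}_{1}(z)=\sum_{i}b_{i}\tilde H_{1}(z+c_{i})e^{\omega_{1}(z+c_{i})^{t}-\omega_{1}z^{t}}$ is an exponential sum of order $<t$, and then match the three available exponents $0,\omega_{1}z^{t},2\omega_{1}z^{t}$ against $Q$ and $Q+\omega_{1}z^{t}$. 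The balance at $e^{\omega_{1}z^{t}}$ gives $(2H_{0}+a_{1})H_{1}=0$, hence $H_{0}=-a_{1}/2$ and $P=-a_{1}^{2}/4$; the balance at $2\omega_{1}z^{t}$ then splits according to the leading coefficient of $Q$: the value $2\omega_{1}$ forces $\mathcal{A}_{1}\equiv0$, $L(z,f)=b_{0}H_{0}$ and $H_{1}^{2}=\frac{b_{0}a_{1}}{2}qe^{Q_{t-1}}$ (case (II)), the value $\omega_{1}$ forces $B_{0}\equiv0$, $L(z,f)=\mathcal{A}_{1}e^{\omega_{1}z^{t}}$ and $H_{1}^{2}=-qe^{Q_{t-1}}\mathcal{A}_{1}$ (case (III)), while incompatibility of these matchings leaves only $t=1$ with polynomial $H_{0},H_{1}$ and $\deg Q=1$ (case (I)).

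I expect the principal obstacle to lie in this last analysis: because $L(z,f)$ mixes shifts and derivatives, its $e^{\omega_{1}z^{t}}$-coefficient $\mathcal{A}_{1}$ is itself a nontrivial exponential polynomial of order $<t$, and the delicate point is to verify that the required cancellations at $e^{\omega_{1}z^{t}}$ and $e^{2\omega_{1}z^{t}}$ are consistent with $\mathcal{A}_{1}$ (and with $B_{0}$) rather than forcing a degenerate collapse, so that cases (II) and (III) genuinely occur; the convex-hull vertex count for $m\ge2$ is comparatively routine once the $e^{Q}L$-exponents are correctly located.
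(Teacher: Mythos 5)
Your plan for (i) coincides with the paper's (the $m(r,L(z,f)/f)=S(r,f)$ comparison), and your forward direction of (ii) via Hadamard factorization plus the Borel-type lemma is a legitimate alternative to the paper's route, which instead follows Li--Yang and rests on the Clunie-type Lemma \ref{l6} (few zeros of $R(f)$ force $R(f)=a_n(f+a_{n-1}/(na_n))^n$); that same lemma produces the binomial identity behind the converse of (ii), so deferring to ``the scheme of Theorem D'' is acceptable there. For the converse of (iii), however, the substance is missing from your sketch: after all $a_i$ vanish one writes $f=he^{\alpha}$, applies the Borel lemma to force $(n-1)u_t=v_t$, and then trichotomizes on $\rho(h)$ versus $t-1$; the alternative $\lambda(f)=\rho(f)-1$ arises precisely because equal shifts $c_i=c_j$ collapse $\sum_iL_i(z,h)e^{\Delta_{c_i}\alpha(z)}$ to a single exponential of order $t-1$, which caps $N(r,1/h)$. ``Tracking zeros through equal versus unequal shifts'' names the phenomenon without supplying this mechanism.

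Two steps are genuinely wrong or absent. First, in (iv) you have no argument for $P\equiv0$: the phrase ``$p(z)$ cannot decompose compatibly with $f=ge^{h}$'' presupposes that $0$ is already a Borel exceptional value, which is not a hypothesis of (iv). The paper's actual step is a second-main-theorem estimate, $nT(r,f)=T(r,p(f))\leq \ol N\bigl(r,1/(p(f)-P)\bigr)+\ol N\bigl(r,1/p(f)\bigr)+S(r,f)$, in which the multiplicity hypotheses on $p$ give $\ol N(r,1/p(f))\leq(n-2)T(r,f)$ while $\ol N(r,1/(p(f)-P))=\ol N(r,1/(qe^{Q}L))\leq T(r,f)+S(r,f)$, yielding a contradiction unless $P\equiv0$; only then do (ii)--(iii) apply. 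Second, in (v)(b) your claim that the balance at $e^{\omega_1z^t}$ is $(2H_0+a_1)H_1=0$, hence $H_0=-a_1/2$ unconditionally, is false: when $v_t=\omega_1$ the term $q(z)e^{Q_{t-1}(z)}\mathcal{A}_0(z)e^{v_tz^t}$ also sits on $e^{\omega_1z^t}$, so the balance reads $(2H_0+a_1)H_1+qe^{Q_{t-1}}\mathcal{A}_0=0$ and $H_0$ need not be $-a_1/2$ --- this is exactly the situation producing conclusion (I), and the paper's example $f=2+3e^{z}$ with $a_1=-3$ shows $H_0=-a_1/2$ genuinely fails there. The correct split (Lemma \ref{l10}) is on which of $\mathcal{A}_0,\mathcal{A}_1$ vanish and whether $v_t$ equals $\omega_1$ or $2\omega_1$, with the residual case forced down to $t=1$ by substituting $H_1=\beta e^{Q_{t-1}}$ into the $2\omega_1$-balance. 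Finally, for $m\geq2$ the paper does not argue by direct cancellation but by computing $N(r,1/F)$ for $F=f^2+a_1f-P$ in two ways via Steinmetz's formula and comparing convex-hull circumferences, with the collinear configuration (where $co(W_0)$ degenerates to a segment) needing the separate Lemma \ref{l8}; your vertex count would have to handle that degenerate case as well.
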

\begin{rem}
	Note that {\em Cases (i)-(iv)} and {\em (v)} of {\em Theorem \ref{t1.1}} improve {\em Theorems D} and {\em E}, respectively. Also, since $L(z,f)$ includes $f^{(k)}(z + c)$ and $\Delta_cf(z)$, {\em Theorem \ref{t1.1}} improves {\em Theorems B-C} as follows:\par
	{\em {(I)}} {\em Cases (i)}, {\em (ii)} and {\em (iii)-(iv)} of {\em Theorem \ref{t1.1}} improve {\em Case (a)}, {\em (b)} and {\em (1)} of each of {\em Theorems B-C}, respectively.\par
	{\em {(II)}} {\em Case (v)-(b)} of {\em Theorem \ref{t1.1}} improves, respectively, {\em Case (2)} and {\em Case (3)} of {\em Theorem B} and {\em Theorem C}.
\end{rem}
This following three examples clarify {\em Cases (ii)-(iii)}.
\begin{ex}
	Take $L(z,f)=f''(z+c)$. Then the function $f=e^{2z}$ satisfies the equation $f^2-\frac{1}{4}e^{2z}L(z,f)=0$ such that $e^{2c}=1$. Clearly, $0=\lambda(f)=\rho(f)-1$. This example clarifies {\em Theorem B} as well.
\end{ex}
\begin{ex}
	Let $L(z,f)=\Delta_cf(z)$. Then the function $f=e^{\alpha z}$ satisfies the equation $f^2-\frac{1}{2}e^{\alpha z}L(z,f)=0$ such that $e^{\alpha c}=3$. Clearly, $0=\lambda(f)=\rho(f)-1$. This example also satisfies {\em Theorem C}.
\end{ex}
\begin{ex}
	Let $L(z,f)=f(z+1)+f'(z+1)-f''(z+1)$. Then the function $f=(z+1)e^{z}$ satisfies the equation $f^2-(z+1)e^{z-1}L(z,f)=0$. Note that, here $c_1=c_2=c_3=1$ and $f\in\Gamma_0'$.
\end{ex}
The next example satisfies {\em Case (iv)}.
\begin{ex}
	Let $L(z,f)=f(z+\log 2)+f''(z+\pi i)$. Then the function $f=e^{iz}$ satisfies the equation $f^3+qe^{2i z}L(z,f)=0$, where $q=\frac{1}{e^{-\pi}-2^i}$. Note that, here $p(z)=z^3$ and $ card \{z: p(z) =p'(z) =p''(z) =0\} = 1$. Also, $a_2=a_1=0\equiv P(z)$ and $f\in\Gamma_0'$.
\end{ex}
By the following example, it is clear that the {\em Case (v)-(a)} occurs significantly.
\begin{ex}
	Take $L(z,f)=f'(z+\log 4)-4f(z+\log 3)$ and $m=2$. Then the function $f=e^{2z}-e^{z}+1$ satisfies the equation $f^2-2f+\frac{1}{4}e^{2z}L(z,f)=-1$. Note that here $f\in\Gamma_2'$.
\end{ex}
The following two examples show that the {\em Case (v)-(b)-(I)} actually holds.	
\begin{ex}
	We take $L(z,f)=f(z+c)$. Then the function $f=d+e^{\alpha z}$ satisfies the equation $f^2-df-e^{\alpha z}L(z,f)=0$ such that $e^{\alpha c}=1$.  Here, $P(z)\equiv 0$.\\ Also, the same function satisfies $f^2-3df+e^{\alpha z}L(z,f)=-2d^2$ such that $e^{\alpha c}=-1$. Here, $f\in\Gamma_1'$. Here, $P(z)\not\equiv 0$. This example is true for {\em Theorem E} as well.
\end{ex}
\begin{ex}
	Put $L(z,f)=f(z+\log 2)+f'(z+\pi i)+f''(z+2\pi i)$ and $m=1$. Then the function $f=2+3e^{z}$ satisfies the equation $f^2-3f-\frac{3}{2}e^{z}L(z,f)=-2$. Here, $P(z)\not\equiv 0$.\\ Also, let $L(z,f)=f(z+\log 3)-f'(z+\log 4)+f''(z+\log 2)$ and $m=1$. Then the function $f=3+e^{z}$ satisfies the equation $f^2-3f-e^{z}L(z,f)=0$.  Here, $P(z)\equiv 0$.
\end{ex}
Next example shows that the {\em Case (v)-(b)-(II)} actually occurs.
\begin{ex}
	Let $L(z,f)=3f(z)+f'(z+\log 2)-3f''(z+2\pi i)$ and $m=1$. Then the function $f=-\frac{a_1}{2}+2e^{3z}$ satisfies the equation $f^2+a_1f+\frac{8}{3a_1}e^{6z}L(z,f)=-\frac{a_1^2}{4}$. Note that here $b_0=3$, $H_0=-\frac{a_1}{2}$ and so, $L(z,f)=-\frac{3a_1}{2}=b_0H_0$.
\end{ex}
Next example shows that the {\em Case (v)-(b)-(III)} actually occurs.
\begin{ex}
	Let $L(z,f)=f(z)-f(z+\log 2)+\frac{1}{2}f'(z+\log 2)+\frac{2}{9}f'(z+\log 3)-\frac{1}{9}f''(z+\log 3)$ and $m=1$. Then the function $f=-\frac{a_1}{2}+ze^{2z}$ satisfies the equation $f^2+a_1f-ze^{2z}     L(z,f)=-\frac{a_1^2}{4}$. Note that here, $q(z)=-z$, $Q(z)=2z$, $Q_{t-1}(z)=0$. Also, $\mathcal{A}_1(z)=\sum_{i=0}^{5}\left(b_i\tilde{H}_1(z+c_i)e^{2c_i}\right)=z$. So, $H_1^2(z)=z^2=-q(z)e^{Q_{t-1}(z)}\mathcal{A}_1(z)$ and $L(z,f)=\mathcal{A}_1(z)e^{2z}=\sum_{i=0}^{5}\left(b_i\tilde{H}_1(z+c_i)e^{2c_i}\right)e^{2z}=ze^{2z}$.
\end{ex}
\section{Lemmas}
We give the following well-known results which are important to prove our theorems.
\begin{lem}\label{l1}\cite{Chiang-Feng}
	Let $f$ be a non-constant meromorphic function and $c_1$, $c_2$ be two complex numbers such that $c_1\neq c_2$. Let $f(z)$ be a meromorphic function with finite order $\rho$, then for each $\epsilon > 0$,
	\beas m\left(r,\frac{f(z+c_1)}{f(z+c_2)}\right)=S(r,f).\eeas
\end{lem}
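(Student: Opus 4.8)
The lemma is the difference analogue of the classical lemma on the logarithmic derivative, and the plan is to first reduce the two-shift quotient to a single shift and then bound the single-shift logarithmic difference via the Poisson--Jensen formula. Writing
\beas
\frac{f(z+c_1)}{f(z+c_2)} = \frac{f(z+c_1)}{f(z)}\cdot\frac{f(z)}{f(z+c_2)},
\eeas
subadditivity of $m(r,\cdot)$ reduces matters to estimating $m(r, f(z+c)/f(z))$ for a single complex constant $c$: one applies this with $c=c_1$ for the first factor and, after the substitution $u=z+c_2$, with $c=-c_2$ for the second, the resulting $O(|c_2|)$ change in radius being absorbed into the error term since $f$ has finite order. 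Thus everything rests on the single-shift estimate.

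For the single shift I would start from the Poisson--Jensen representation of $\log|f|$ on a disc $|\zeta|\leq R$ with $R>r+|c|$, and differentiate it to obtain a pointwise formula for $f'/f$ consisting of a boundary Poisson-type integral in $\log|f(Re^{i\phi})|$ together with two rational sums over the zeros $a_\mu$ and poles $b_\nu$ lying in the disc. Since
\beas
\log\frac{f(z+c)}{f(z)} = c\int_0^1\frac{f'(z+tc)}{f(z+tc)}\,dt,
\eeas
I would substitute this $f'/f$ expression, integrate in $t$, and then average over $|z|=r$. This splits $m(r, f(z+c)/f(z))$ into three parts: the boundary integral, controlled by $|c|\,T(R,f)/(R-r-|c|)$; and the shifted sums over zeros and over poles, controlled through $N(R,1/f)$ and $N(R,f)$. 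Invoking the finite-order hypothesis in the form $T(R,f)=O(R^{\rho+\epsilon})$ and choosing $R$ just beyond $r$ (say $R=2r$, or $R=r+r^{1-\delta}$ for a sharper exponent), every part is $O(r^{\rho-1+\epsilon})=S(r,f)$; when $\rho<1$ the bound is in fact $o(1)$.

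The main obstacle is the uniform estimation of the shifted zero and pole sums: after replacing $z$ by $z+tc$ one must bound $\sum_\mu\big|\log\big|R(z+tc-a_\mu)/(R^2-\bar a_\mu(z+tc))\big|\big|$, and its pole analogue, uniformly in $t\in[0,1]$ and on average over $|z|=r$. The device is to compare the shifted point $z+tc$ with $z$, using that $|c|$ is bounded, so that the contribution of each term is governed by the distance to the nearest zero or pole; the potentially large contributions from points close to the circle $|z|=r$ are then tamed by the standard estimate for the mean of $\log^+(1/|z-a|)$ over the circle, which converts the sums into the counting functions $n(R,\cdot)$ and hence into $N(R,\cdot)$. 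It is precisely the finite order of $f$ that renders the resulting bound an $S(r,f)$ term, completing the reduction and the proof.
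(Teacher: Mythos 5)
Your proposal is correct and follows essentially the same route as the source the paper cites for this lemma (Chiang--Feng): the paper itself gives no proof, and the cited proof is exactly your reduction of the two-shift quotient to two single-shift quotients followed by the Poisson--Jensen estimate of $m\bigl(r, f(z+c)/f(z)\bigr)$ by $O(r^{\rho-1+\epsilon})$ via the boundary integral and the shifted zero/pole sums. The only point worth tightening is the final identification $O(r^{\rho-1+\epsilon})=S(r,f)$, which is the form in which the lemma is conventionally quoted and used here.
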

\begin{lem}\cite[Corollary 2.3.4]{Laine_Gruyter}\label{l2}
	Let $f$ be a transcendental meromorphic function and $k\geq 1$ be an integer. Then $m\left(r,\frac{f^{(k)}}{f}\right)= S(r,f)$. 
\end{lem}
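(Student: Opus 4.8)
The plan is to establish the classical \emph{lemma on the logarithmic derivative} in two stages: first reduce the general order $k$ to the case $k=1$ by induction, and then prove the base case directly from the Poisson--Jensen formula together with a growth lemma of Borel type.

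\emph{Reduction to $k=1$.} Writing $\frac{f^{(k)}}{f}=\frac{f^{(k)}}{f^{(k-1)}}\cdot\frac{f^{(k-1)}}{f}$ and using the subadditivity of the proximity function, I would obtain
\[
m\!\left(r,\frac{f^{(k)}}{f}\right)\le m\!\left(r,\frac{(f^{(k-1)})'}{f^{(k-1)}}\right)+m\!\left(r,\frac{f^{(k-1)}}{f}\right).
\]
The first term is handled by the $k=1$ case applied to $g=f^{(k-1)}$, and an induction on $k$ disposes of the second. For this induction to close inside the error class $S(r,f)$, I first need that differentiation does not enlarge the characteristic beyond a bounded factor: from $m(r,f')\le m(r,f)+m(r,f'/f)$ and $N(r,f')\le 2N(r,f)$ one gets $T(r,f^{(j)})=O(T(r,f))+S(r,f)$, hence $S(r,f^{(j)})=S(r,f)$ for every $j$. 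This guarantees that each term produced by the induction is genuinely $S(r,f)$.

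\emph{The base case $k=1$.} Starting from the Poisson--Jensen representation of $\log|f|$ on a disc $|z|\le R$ (with the zeros $a_\mu$ and poles $b_\nu$ of $f$ in $|z|<R$ appearing in the Blaschke-type sums), I would differentiate with respect to $z$ to produce an explicit formula for $f'/f$: one term coming from the derivative of the Poisson kernel against $\log|f(Re^{i\phi})|$, and two sums of the shape $\sum (z-a_\mu)^{-1}$ and $\sum (z-b_\nu)^{-1}$ over the zeros and poles. Taking $\log^+$ and integrating over $\theta$ then yields the fundamental inequality
\[
m\!\left(r,\frac{f'}{f}\right)=O\!\left(\log^+ T(R,f)+\log^+\frac{1}{R-r}+\log^+R+1\right)\qquad(r<R),
\]
where the Poisson term is controlled by $T(R,f)$ and the zero/pole sums are controlled, after integration, by the counting functions $n(R,f)$ and $n(R,1/f)$, themselves bounded by $O(T(R,f))$.

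\emph{Removing the dependence on $R$.} The final step is to choose $R$ as a function of $r$ so that the right-hand side collapses into $S(r,f)$. Taking, say, $R=r+\tfrac{1}{T(r,f)}$ and invoking the standard Borel growth lemma, $T(R,f)\le 2\,T(r,f)$ holds outside an exceptional set of finite logarithmic measure; on its complement the inequality above becomes $m(r,f'/f)=O(\log T(r,f)+\log r)=S(r,f)$. I expect the main technical obstacle to be precisely this last manoeuvre --- the careful estimation of the zero/pole sums in the differentiated formula and the correct application of the Borel lemma so that the exceptional set genuinely has finite logarithmic measure --- since the Poisson-kernel contribution is comparatively routine. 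Transcendence of $f$ enters only at the very end, guaranteeing $\log r=S(r,f)$ and hence that the whole bound is indeed $o(T(r,f))$.
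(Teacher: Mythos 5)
The paper offers no proof of this lemma at all --- it is quoted directly from \cite[Corollary 2.3.4]{Laine_Gruyter} --- and your proposal is a correct reconstruction of exactly the classical argument given in that cited source: reduction to $k=1$ by induction (made legitimate by first establishing $T(r,f^{(j)})=O(T(r,f))+S(r,f)$, so that $S(r,f^{(j)})=S(r,f)$), the differentiated Poisson--Jensen representation yielding the fundamental estimate $m\left(r,\frac{f'}{f}\right)=O\left(\log^{+}T(R,f)+\log^{+}\frac{1}{R-r}+\log^{+}R+1\right)$, and a Borel-type growth lemma with $R=r+\frac{1}{T(r,f)}$ to collapse the bound to $S(r,f)$ outside an exceptional set of finite linear (hence finite logarithmic) measure, with transcendence of $f$ absorbing the $\log r$ term. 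Since this is essentially the same route as the source the paper cites, no further comment is needed beyond the standard caveat, which your sketch implicitly subsumes, that Poisson--Jensen requires the harmless normalization $f(0)\neq 0,\infty$.
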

Combining {Lemmas \ref{l1}-\ref{l2}} we have the following lemma:
\begin{lem}\label{l3}
	Let $f(z)$ be a meromorphic function of finite order and let $c\in\mathbb{C}$, $k\geq 1$ be an integer. Then $m\left(r,\frac{f^{(k)}(z+c)}{f}\right)= S(r,f)$.
\end{lem}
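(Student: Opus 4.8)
The plan is to factor the quotient so that each piece is governed by exactly one of the two preceding lemmas. Writing
\[
\frac{f^{(k)}(z+c)}{f(z)} = \frac{f^{(k)}(z+c)}{f(z+c)}\cdot\frac{f(z+c)}{f(z)},
\]
and using subadditivity of the proximity function $m(r,\cdot)$ over products, reduces the desired estimate to two independent bounds.

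For the first factor I would set $g(z)=f(z+c)$, which is again meromorphic of finite order and satisfies $g^{(k)}(z)=f^{(k)}(z+c)$. Applying Lemma \ref{l2} directly to $g$ yields $m\left(r,\frac{g^{(k)}}{g}\right)=S(r,g)$, that is, $m\left(r,\frac{f^{(k)}(z+c)}{f(z+c)}\right)=S(r,g)$. For the second factor I would invoke Lemma \ref{l1} with $c_1=c$ and $c_2=0$ (valid for $c\neq 0$; when $c=0$ the second factor is identically $1$ and the conclusion follows immediately from Lemma \ref{l2} alone), which gives $m\left(r,\frac{f(z+c)}{f(z)}\right)=S(r,f)$.

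Adding these via
\[
m\left(r,\frac{f^{(k)}(z+c)}{f}\right)\le m\left(r,\frac{f^{(k)}(z+c)}{f(z+c)}\right)+m\left(r,\frac{f(z+c)}{f(z)}\right)
\]
bounds the left side by $S(r,g)+S(r,f)$. The main obstacle — in fact the only real subtlety — is that the first term is a priori small relative to $g=f(z+c)$ rather than relative to $f$ itself, so I must verify that $S(r,f(z+c))=S(r,f)$. This follows from the shift-invariance of the Nevanlinna characteristic for finite-order functions, namely $T(r,f(z+c))=T(r,f)+S(r,f)$ (the same Chiang--Feng estimates that underlie Lemma \ref{l1}), which forces the two error classes to coincide. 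Once this identification is made, the right-hand side collapses to $S(r,f)$, and the proof is complete.
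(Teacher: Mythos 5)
Your proposal is correct and follows exactly the paper's argument: the authors likewise factor $\frac{f^{(k)}(z+c)}{f(z)}=\frac{f^{(k)}(z+c)}{f(z+c)}\cdot\frac{f(z+c)}{f(z)}$ and combine Lemmas \ref{l1} and \ref{l2}, though they state it in a single line without spelling out the identification $S(r,f(z+c))=S(r,f)$ that you rightly flag as the one point needing justification.
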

\begin{proof}
	$$ m\left(r,\frac{f^{(k)}(z+c)}{f}\right)=m\left(r,\frac{f^{(k)}(z+c)}{f(z+c)}.\frac{f(z+c)}{f(z)}\right)=S(r,f).$$
\end{proof}
\begin{lem}\label{l4}\cite{Li-Yang_jmaa_2017} Let $f$ be a non-constant meromorphic function of hyper order less than $1$ and $c\in\mathbb{C}$. Then \beas N(r,1/f(z+c))= N(r,0;f(z))+S(r,f).\eeas
\end{lem}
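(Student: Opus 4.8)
The plan is to compare the zeros of $f(z+c)$ with those of $f(z)$ geometrically and then to absorb the discrepancy created by the shift of the argument using the growth behaviour forced by the hyper-order hypothesis. Write $n(r,1/g)$ for the number of zeros of a meromorphic function $g$ in $\{|z|\leq r\}$, counted with multiplicity. The starting point is the purely set-theoretic observation that $z_{0}$ is a zero of $f(z+c)$ of multiplicity $\mu$ if and only if $z_{0}+c$ is a zero of $f$ of multiplicity $\mu$; hence the zeros of $f(z+c)$ in $\{|z|\leq r\}$ are exactly the translates $w-c$ of those zeros $w$ of $f$ satisfying $|w-c|\leq r$. Since $|w|\leq r-|c|$ forces $|w-c|\leq r$ and $|w-c|\leq r$ forces $|w|\leq r+|c|$, we obtain the two-sided bound
$$n(r-|c|,1/f)\leq n(r,1/f(z+c))\leq n(r+|c|,1/f).$$

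First I would integrate these pointwise inequalities against $dt/t$ in the usual way to pass to the integrated counting functions; the shift of the radius by the fixed amount $|c|$ and the possible mass at the origin affect the result only by an $O(\log r)$ term, giving
$$N(r-|c|,1/f)+O(\log r)\leq N(r,1/f(z+c))\leq N(r+|c|,1/f)+O(\log r).$$
In the transcendental regime at issue this $O(\log r)$ is $S(r,f)$, so the whole problem is reduced to controlling the effect on $N(\cdot,1/f)$ of translating its argument by $|c|$.

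The decisive step is therefore to establish
$$N(r\pm|c|,1/f)=N(r,1/f)+S(r,f),$$
and this is exactly where the hypothesis $\rho_{2}(f)<1$ is indispensable. I would invoke the Borel-type growth lemma for functions of hyper-order less than one: if $\Phi(r)$ is non-decreasing with $\limsup_{r\to\infty}\frac{\log\log\Phi(r)}{\log r}=\rho_{2}<1$, then for each $\delta$ with $0<\delta<1-\rho_{2}$ one has $\Phi(r+|c|)=\Phi(r)+o\!\left(\Phi(r)/r^{\delta}\right)$ outside a set of finite logarithmic measure. Applying this to $\Phi(r)=N(r,1/f)$, which inherits the hyper-order bound from $T(r,f)$ via $N(r,1/f)\leq T(r,f)+O(1)$, makes the translation error $o\!\left(N(r,1/f)/r^{\delta}\right)=S(r,f)$; combining with the sandwich above collapses everything to $N(r,1/f(z+c))=N(r,1/f)+S(r,f)=N(r,0;f)+S(r,f)$.

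The main obstacle is precisely this growth lemma: without the restriction $\rho_{2}(f)<1$ a translation of the radius by a fixed amount need not be asymptotically negligible for a rapidly growing counting function, so the hyper-order hypothesis cannot be dropped. As a cross-check I would note an alternative route that avoids the direct disk comparison: apply to $g=1/f$ the shift-invariance of the characteristic $T(r,g(z+c))=T(r,g)+S(r,g)$ together with the difference analogue of the logarithmic derivative lemma $m(r,g(z+c)/g(z))=S(r,g)$ and its reciprocal, both valid under $\rho_{2}(g)=\rho_{2}(f)<1$; subtracting the resulting proximity estimate from the characteristic estimate yields $N(r,1/f(z+c))=N(r,0;f)+S(r,f)$ as before, since the poles of $1/f$ are the zeros of $f$ and $S(r,1/f)=S(r,f)$.
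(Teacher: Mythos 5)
The paper offers no proof of this lemma at all: it is quoted verbatim from Li--Yang \cite{Li-Yang_jmaa_2017}, who in turn take it from the difference analogue of Nevanlinna theory for hyper-order less than one (Halburd--Korhonen--Tohge; cf.\ \cite{Korhonen_JMAA_2009} in the bibliography, and \cite{Chiang-Feng} for the finite-order case). So there is no in-paper argument to compare against; what you have written is a reconstruction, and in outline it is the standard literature proof. Your two main ingredients are exactly right: the set-theoretic sandwich $n(r-|c|,1/f)\leq n(r,1/f(z+c))\leq n(r+|c|,1/f)$, and the Borel-type growth lemma (Halburd--Korhonen--Tohge, Lemma 8.3: for non-decreasing continuous $\Phi$ of hyper-order $\varsigma<1$ and $0<\delta<1-\varsigma$, $\Phi(r+s)=\Phi(r)+o\left(\Phi(r)/r^{\delta}\right)$ outside a set of finite logarithmic measure) applied to $\Phi(r)=N(r,1/f)$, whose hyper-order is controlled by $N(r,1/f)\leq T(r,f)+O(1)$. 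The exceptional set of finite logarithmic measure is consistent with the paper's definition of $S(r,f)$, so this correctly identifies where the hypothesis $\rho_2(f)<1$ enters.

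Two caveats. First, the step you dismiss as ``only an $O(\log r)$ term'' is the one place that needs real care: integrating $n(t+|c|,1/f)$ against $dt/t$ and substituting $s=t+|c|$ produces the kernel $1/(s-|c|)=1/s+|c|/\bigl(s(s-|c|)\bigr)$, whose correction term, after integration by parts, contributes quantities of the shape $N(r+|c|,1/f)/r$ plus $\int^{r+|c|} N(s,1/f)(s-|c|)^{-2}\,ds$; these are not a priori $O(\log r)$ and must themselves be absorbed into $S(r,f)$ using monotonicity and the same growth lemma (or a careful splitting of the integral). This is precisely the technical content of the cited sources, so your sketch is glossier than the actual proof at this point, though the gap is fillable. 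Second, the lemma as stated allows rational $f$, for which $O(\log r)$ is \emph{not} $S(r,f)$; that case is trivial (both counting functions equal $m\log r+O(1)$ with the same $m$, so the difference is $O(1)=o(T(r,f))$), but it should be dispatched explicitly rather than excluded by fiat. Your closing ``cross-check'' is in fact the cleanest route and is closest to how such identities are usually derived: with $g=1/f$, shift-invariance $T(r,g(z+c))=T(r,g)+S(r,g)$ together with $m(r,g(z+c)/g)=S(r,g)$ and its reciprocal give $m(r,g(z+c))=m(r,g)+S(r,g)$, and subtracting from the characteristic yields $N(r,1/f(z+c))=N(r,0;f)+S(r,f)$ directly, with all radius-shift technicalities quarantined inside the two quoted theorems.
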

\begin{lem}\label{l5}\cite{Yang-HX-Kluwer}
	Suppose $f_j (z)$ $( j =1 ,2,...,n+1)$ and $g_k (z)$ $( k =1 ,2,...,n)$ $(n \geq 1)$ are entire functions satisfying the following conditions:
	\begin{itemize}
		\item[(i)] $ \sum_{j=1}^{n} f_j (z)e^{g_j(z)} \equiv f_{n+1} (z)$, \item[(ii)] The order of $f_j (z)$ is less than the order of $e^{g_k(z)}$ for $1 \leq j \leq n +1$ , $1 \leq k \leq n$ and furthermore, the order of $f_j (z)$ is less than the order of $e^{g_h(z)-g_k(z)}$ for $n \geq 2$ and $1\leq j \leq n +1$, $1\leq h<k\leq n$. Then $f_j (z)\equiv 0$, $(j =1 ,2,...,n+1)$.
	\end{itemize} 
\end{lem}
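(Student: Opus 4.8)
The plan is to prove the lemma by induction on the number of exponential terms, after first recasting the inhomogeneous relation (i) as a homogeneous one. Setting $g_{n+1}\equiv 0$, $\phi_j=f_j$ for $1\le j\le n$ and $\phi_{n+1}=-f_{n+1}$, condition (i) becomes $\sum_{j=1}^{n+1}\phi_j e^{g_j}\equiv 0$, a homogeneous exponential identity with $m=n+1$ terms. The two parts of condition (ii) then merge into the single uniform hypothesis that $\rho(\phi_j)<\rho(e^{g_a-g_b})$ for every pair $a\neq b$ with $1\le a,b\le n+1$: pairs drawn from $\{1,\dots,n\}$ are covered by the second part of (ii) (for $n\ge 2$), while a pair involving the index $n+1$ gives $\rho(e^{g_a-g_{n+1}})=\rho(e^{g_a})$, which is the first part of (ii). Thus it suffices to establish the homogeneous Borel-type statement: if $\sum_{j=1}^{m}\phi_j e^{g_j}\equiv 0$ with entire $\phi_j$ satisfying $\rho(\phi_j)<\rho(e^{g_a-g_b})$ for all $a\neq b$, then every $\phi_j\equiv 0$.

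For the induction the base cases are easy: $m=1$ is immediate, and $m=2$ follows since from $\phi_1 e^{g_1}+\phi_2 e^{g_2}\equiv0$ with $\phi_1\not\equiv0$ one gets $e^{g_1-g_2}=-\phi_2/\phi_1$, whence $\rho(e^{g_1-g_2})\le\max\{\rho(\phi_1),\rho(\phi_2)\}<\rho(e^{g_1-g_2})$, a contradiction. For the inductive step I would argue by contradiction, assuming all $\phi_j\not\equiv0$. Dividing the relation by $\phi_1e^{g_1}$ gives $1+\sum_{j=2}^{m}(\phi_j/\phi_1)e^{g_j-g_1}\equiv0$; differentiating kills the constant term and yields $\sum_{j=2}^{m}\Theta_j e^{g_j-g_1}\equiv0$, where $\Theta_j=(\phi_j/\phi_1)'+(\phi_j/\phi_1)(g_j-g_1)'$. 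After multiplying through by $\phi_1^2$ to restore entire coefficients, this is a homogeneous identity with only $m-1$ exponentials, whose pairwise differences $(g_h-g_1)-(g_k-g_1)=g_h-g_k$ reproduce exactly the difference-exponentials already controlled by the hypothesis. Applying the inductive hypothesis forces each $\Theta_j\equiv0$, i.e. $[(\phi_j/\phi_1)e^{g_j-g_1}]'\equiv0$, so $\phi_j e^{g_j}=c_j\,\phi_1 e^{g_1}$ for constants $c_j$. If some $c_j\neq0$, then $\phi_j=c_j\phi_1 e^{g_1-g_j}$ would have order exactly $\rho(e^{g_1-g_j})$ (because $\rho(\phi_1)<\rho(e^{g_1-g_j})$), contradicting $\rho(\phi_j)<\rho(e^{g_1-g_j})$; hence all $c_j=0$, forcing $\phi_1 e^{g_1}\equiv0$ and $\phi_1\equiv0$, against our assumption. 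Therefore at least one $\phi_j\equiv0$, and deleting that term leaves a relation with fewer exponentials satisfying the same hypotheses, so the induction closes and all $\phi_j\equiv0$.

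The delicate point, and the step I expect to be the main obstacle, is verifying that the reduced coefficients $\phi_1^2\Theta_j=(\phi_j'\phi_1-\phi_j\phi_1')+\phi_1\phi_j(g_j-g_1)'$ still satisfy the order requirement $\rho(\phi_1^2\Theta_j)<\rho(e^{g_h-g_k})$ for all surviving pairs. Differentiation never raises the order of the $\phi_i$, so the only genuinely new contribution comes from the factor $(g_j-g_1)'$. Here one uses the sharp growth comparison $\rho(h)\le\rho(e^{h})$ for entire $h$ (via Borel--Carath\'eodory), which in fact forces $\rho(g_j-g_1)=0$ whenever $\rho(e^{g_j-g_1})<\infty$, in particular for the polynomial exponents $\omega_j z^t$ occurring in the applications. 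Consequently $\rho(\phi_1^2\Theta_j)\le\max_i\rho(\phi_i)$, which by hypothesis lies strictly below every $\rho(e^{g_a-g_b})$, so the inductive hypothesis genuinely applies. Keeping careful track of this order bookkeeping through each reduction --- and, if full generality of infinite-order exponents is desired, replacing the order estimate by the corresponding comparison of Nevanlinna characteristic functions via Lemma \ref{l2} --- is where the real work of the proof concentrates.
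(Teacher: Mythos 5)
The paper offers no proof of this lemma to compare against: it is imported verbatim from Yang--Yi \cite{Yang-HX-Kluwer} (Theorem 1.62 there), so your proposal can only be judged on its own merits. Your argument is the classical Borel-type proof --- homogenize by setting $g_{n+1}\equiv 0$, divide by $\phi_1e^{g_1}$, differentiate to kill the constant, multiply by $\phi_1^2$ to restore entire coefficients, and induct --- and the skeleton is sound: the homogenization correctly merges the two halves of hypothesis (ii), the base cases are right, and the endgame (each $\Theta_j\equiv 0$ gives $\phi_je^{g_j}=c_j\phi_1e^{g_1}$, and $c_j\neq 0$ would force $\rho(\phi_j)=\rho(e^{g_1-g_j})$, contradicting the pair hypothesis) is correct. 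In the setting this paper actually uses --- all exponents polynomials, e.g.\ $\omega_iz^t$, $Q(z)$, $\Delta_{c_i}\alpha(z)$, so every $\rho(e^{g_a-g_b})$ is finite and every $g_a-g_b$ is a polynomial --- your order bookkeeping $\rho(\phi_1^2\Theta_j)\le\max_i\rho(\phi_i)$ is valid and the induction closes.

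However, for the lemma in its stated generality (arbitrary entire $g_k$) the step you flag as delicate is a genuine gap, and your proposed repair does not close it. The problem is mixed configurations where some difference exponentials have infinite order and others finite order. Take $g_1=e^z$, $g_2=0$, $g_3=z$: then $\rho(e^{g_2-g_3})=1$, so the hypothesis only demands $\rho(\phi_j)<1$, yet after dividing by $\phi_1e^{g_1}$ the reduced coefficient of $e^{g_2-g_1}$ contains $(g_2-g_1)'=-e^z$, of order exactly $1$, so the strict inequality $\rho(\psi_j)<\rho(e^{g_2-g_3})$ required by your inductive hypothesis fails. Your fallback via Lemma \ref{l2} cannot rescue this: the logarithmic-derivative bound $T(r,(g_j-g_1)')=m\left(r,\frac{(e^{g_j-g_1})'}{e^{g_j-g_1}}\right)=S(r,e^{g_j-g_1})$ controls $(g_j-g_1)'$ only against the \emph{same} pair's exponential, whereas the induction needs it small against the \emph{other} pairs $e^{g_h-g_k}$, which may grow much more slowly. (In the example above the obstruction can be dodged by dividing by $\phi_2e^{g_2}$ instead of $\phi_1e^{g_1}$, since then the surviving pair difference $g_1-g_3$ has infinite order; but your proof fixes the pivot arbitrarily, and showing that a good pivot always exists is precisely the nontrivial work the sketch leaves open.) So: correct and complete for the polynomial-exponent case that every application in this paper requires, but not a proof of the lemma as stated.
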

\begin{lem}\label{l6}\cite{Chen_Science Press,Hayman_Oxford}
Let $f$ be a meromorphic function and suppose that
$$R(z)=a_nf(z)^n+\cdots+a_0(z)$$
has small meromorphic coefficients $a_j(z)$, $a_n\neq 0$ in the sense of $T(r, a_j)=S(r, f)$. Moreover, assume that
$$\ol N\left(r,\frac{1}{R}\right)+\ol N(r, f)=S(r, f).$$
Then $$R(z)=a_n\left(f+\frac{a_{n-1}}{na_n}\right).$$
\end{lem}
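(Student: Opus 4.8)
The stated conclusion should read $R(z)=a_n\bigl(f+\tfrac{a_{n-1}}{na_n}\bigr)^n$; that is, the polynomial $P(w)=a_nw^n+\cdots+a_0$ in $w$, whose coefficients lie in the field $\mathcal K$ of small functions of $f$, must equal $a_n$ times a single linear factor raised to the $n$-th power. The plan is to regard $R$ as $P(f)$ and to prove that $P$ has only one distinct root; matching the coefficient of $w^{n-1}$ then pins that root down to $-a_{n-1}/(na_n)$. Throughout I may assume $f$ is transcendental, since otherwise $T(r,f)=S(r,f)$ and the assertion is vacuous.

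First I would record that every root $c$ of $P$ is itself a small function: from $c^{\,n}=-\sum_{i<n}(a_i/a_n)c^{\,i}$ one bounds $m(r,c)\le\sum_i m(r,a_i/a_n)+O(1)=S(r,f)$ (using $|c|\le n\max_i|a_i/a_n|$ wherever $|c|\ge1$), while the poles of $c$ can sit only over poles of the $a_i$ or zeros of $a_n$, so $N(r,c)=S(r,f)$; hence $T(r,c)=S(r,f)$. Writing the factorisation over $\mathcal K$ (passing to a finite algebraic extension to split off conjugate roots if necessary) as $P(w)=a_n\prod_{j=1}^{s}(w-c_j)^{m_j}$ with the $c_j$ distinct and small, I obtain $R=a_n\prod_{j=1}^s(f-c_j)^{m_j}$. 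Since $c_i-c_j$ is a nonzero small function for $i\neq j$, the zero sets of distinct factors overlap only on a set contributing $S(r,f)$, so
\[\ol N\Bigl(r,\tfrac1R\Bigr)=\sum_{j=1}^s\ol N\Bigl(r,\tfrac{1}{f-c_j}\Bigr)+S(r,f).\]
The hypothesis $\ol N(r,1/R)=S(r,f)$ therefore gives $\ol N(r,1/(f-c_j))=S(r,f)$ for every $j$.

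Next I would apply the Second Main Theorem for moving targets to $f$ with the $s+1$ distinct small targets $c_1,\dots,c_s,\infty$, whose sharp form (Yamanoi, with constant $q-2$) yields
\[(s-1)\,T(r,f)\le\sum_{j=1}^s\ol N\Bigl(r,\tfrac{1}{f-c_j}\Bigr)+\ol N(r,f)+S(r,f)=S(r,f),\]
the final equality using the second hypothesis $\ol N(r,f)=S(r,f)$. As $f$ is transcendental this forces $s-1\le0$, so $s=1$ and $P(w)=a_n(w-c_1)^n$. Comparing the coefficient of $w^{n-1}$ in $a_n(w-c_1)^n=a_nw^n-na_nc_1w^{n-1}+\cdots$ with that of $P$ gives $-na_nc_1=a_{n-1}$, i.e. $c_1=-a_{n-1}/(na_n)$, and hence $R=a_n\bigl(f+\tfrac{a_{n-1}}{na_n}\bigr)^n$, as required.

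The main obstacle is the second paragraph rather than the final counting. Justifying that the distinct roots can be taken as genuine single-valued small meromorphic functions, so that the moving-target theorem may legitimately use them as targets, requires care when $P$ does not already split over $\mathcal K$; one handles this by working in the splitting field of $P$ over $\mathcal K$ and checking that both smallness and the integer $s$ are preserved, or alternatively by using the discriminant of $P$ to separate the roots away from an exceptional set of zero counting density. The other delicate point is the appeal to the Second Main Theorem for small moving targets, which is deep in full generality but is classical and elementary in the finite-order regime in which this lemma is actually applied, so no real difficulty arises for the intended uses.
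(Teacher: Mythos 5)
First, a point of reference: the paper does not prove this lemma at all --- it is quoted from Hayman's book and Chen's book, where it is the classical Tumura--Clunie theorem, proved by an elementary logarithmic-derivative argument. You are right, however, that the statement as printed is defective: the conclusion must read $R(z)=a_n\bigl(f+\tfrac{a_{n-1}}{na_n}\bigr)^{n}$, and without the $n$-th power the lemma is false (take $f=e^{z}$, $R=f^{2}$). Spotting and repairing that typo is correct and worthwhile.

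Your proof, though, has a genuine gap exactly at its decisive step. To conclude $s=1$ you apply a second main theorem with truncated counting functions to the moving targets $c_1,\dots,c_s$, but these roots of $P$ over the field $\mathcal K$ of small functions are in general only algebroid: multivalued, branched at the zeros of the discriminant of $P$. An algebraic extension of $\mathcal K$ does not consist of meromorphic functions on $\mathbb C$, and Yamanoi's theorem --- like every substitute you might reach for --- is stated for single-valued meromorphic small targets. Your proposed fix (``work in the splitting field of $P$ over $\mathcal K$ and check that smallness and $s$ are preserved'') is precisely the unproved hard point; there is no off-the-shelf truncation-one SMT over such an extension, and Selberg's Nevanlinna theory for algebroid functions does not supply it. Your fallback remark that the truncated moving-target SMT is ``classical and elementary in the finite-order regime'' is also not accurate: truncation level one for moving targets is essentially Yamanoi's deep theorem in every regime; only the untruncated versions (Osgood, Steinmetz) are classical, and they do not give the $\ol N$ inequality your counting argument needs. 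So the argument is not closed even for the intended applications.

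Two further remarks for comparison. Even in the favourable case where $P$ does split over $\mathcal K$, Yamanoi is overkill: given two distinct meromorphic small roots $c_1\neq c_2$, set $F=(f-c_1)/(f-c_2)$; then $T(r,F)=T(r,f)+S(r,f)$, while $\ol N(r,1/F)$, $\ol N(r,F)$ and $\ol N\bigl(r,1/(F-1)\bigr)$ are all $S(r,f)$ (note $F-1=(c_2-c_1)/(f-c_2)$, so its zeros sit over zeros of $c_2-c_1$, poles of $f$, or poles of small functions), and the classical three-value second main theorem applied to $F$ already forces $s=1$. By contrast, the proof in the cited sources avoids roots entirely: the hypotheses make $\phi=R'/R$ a small function (its poles are simple and lie over the few distinct zeros and poles of $R$), and substituting $R=\sum a_if^{i}$ into $R'=\phi R$ and estimating with Clunie's lemma forces the coefficients into the binomial pattern, yielding $R=a_n\bigl(f+\tfrac{a_{n-1}}{na_n}\bigr)^{n}$ by direct computation, single-valued throughout. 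That elementary route is the one to reconstruct; as written, your proposal rests on an inapplicable deep theorem and leaves the algebroid difficulty unresolved.
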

The following lemma gives the Nevanlinna characteristic and counting functions of an exponential polynomial.
\begin{lem}\label{l7}\cite{Steinmetz}
	Let $f(z)$ be given by (\ref{e1.2}). Then $$T (r, f) = C(co(W_0))\frac{r^t}{2\pi}+ o(r^t).$$ If $H_0(z) \not\equiv 0$, then $$m(r,1/f)= o(r^t),$$ while if $H_0(z) \equiv 0$, then $$N(r,1/f)=C(co(W))\frac{r^t}{2\pi}+ o(r^t).$$
\end{lem}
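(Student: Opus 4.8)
The plan is to reduce the statement to the asymptotic evaluation of $\log|f(re^{i\theta})|$ on the circles $|z|=r$ and then to recognise the resulting angular integrals as perimeters of convex hulls. Write $\omega_0=0$, so that $f(z)=\sum_{j=0}^{m}H_j(z)e^{\omega_j z^t}$, and recall that each $H_j$ has order $<t$, whence $\log M(r,H_j)=o(r^t)$ uniformly in the argument. For $z=re^{i\theta}$ one has $\operatorname{Re}(\omega_j z^t)=r^t\operatorname{Re}(\bar\omega_j e^{-it\theta})$, so I would introduce the two piecewise–trigonometric functions $\mu_0(\theta)=\max_{0\le j\le m}\operatorname{Re}(\bar\omega_j e^{-it\theta})$ and $\mu_W(\theta)=\max_{1\le j\le m}\operatorname{Re}(\bar\omega_j e^{-it\theta})$. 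Since $\bar\omega_0=0$, these are exactly the values at $t\theta$ of the support function $h_K(\alpha)=\max_{w\in K}\operatorname{Re}(we^{-i\alpha})$ of $K=co(W_0)$, respectively $K=co(W)$, and $\mu_0=\max(\mu_W,0)=\mu_W^{+}$.

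First I would establish the pointwise asymptotics. For every $\theta$ the crude bound $|f|\le (m+1)\max_j|H_j|\,e^{r^t\mu_0(\theta)}$ gives $\log^+|f(re^{i\theta})|\le r^t\mu_0(\theta)+o(r^t)$ uniformly. Conversely, at each $\theta$ for which the maximum defining $\mu_0$ (or $\mu_W$, in case $H_0\equiv0$) is attained by a single index $j^\ast$ with $H_{j^\ast}\not\equiv0$, that term strictly dominates the others by a factor $e^{r^t\delta}$ with $\delta>0$, so $\log|f(re^{i\theta})|=r^t\nu(\theta)+o(r^t)$, where $\nu=\mu_0$ if $H_0\not\equiv0$ and $\nu=\mu_W$ if $H_0\equiv0$. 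The exceptional arguments are the finitely many critical rays, along which two or more terms are comparable and cancellation may occur. Since $f$ is entire, $T(r,f)=m(r,f)=\frac{1}{2\pi}\int_0^{2\pi}\log^+|f|\,d\theta$; discarding $\varepsilon$-neighbourhoods of the critical rays (whose total measure tends to $0$) and using the uniform upper bound on the remaining good arcs, I would obtain $m(r,f)=\frac{r^t}{2\pi}\int_0^{2\pi}\mu_0(\theta)\,d\theta+o(r^t)$ (note $\mu_0=\mu_0^{+}\ge0$, and $\mu_0=\mu_W^{+}$ covers $H_0\equiv0$ as well). Finally I would invoke Cauchy's classical formula, that for a compact convex set $K$ the perimeter equals $\int_0^{2\pi}h_K(\psi)\,d\psi$ (which for a segment reduces to twice its length, matching the convention fixed for $C(co(\cdot))$), together with the substitution $\psi=t\theta$: as $t$ is a positive integer, $\theta\mapsto t\theta$ wraps the circle $t$ times and $\int_0^{2\pi}h_{co(W_0)}(t\theta)\,d\theta=\int_0^{2\pi}h_{co(W_0)}(\psi)\,d\psi=C(co(W_0))$. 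This yields $T(r,f)=C(co(W_0))\frac{r^t}{2\pi}+o(r^t)$.

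For the two remaining assertions I would treat $m(r,1/f)=\frac{1}{2\pi}\int_0^{2\pi}\log^-|f|\,d\theta$ and pass to $N(r,1/f)$ through the first fundamental theorem $T(r,1/f)=T(r,f)+O(1)$. On the good arcs $\log^-|f|=r^t\nu^-(\theta)+o(r^t)$. When $H_0\not\equiv0$ one has $\nu=\mu_0\ge0$, so $\nu^-\equiv0$, the good-arc contribution is $o(r^t)$, and hence $m(r,1/f)=o(r^t)$, giving $N(r,1/f)=C(co(W_0))\frac{r^t}{2\pi}+o(r^t)$. When $H_0\equiv0$ one has $\nu=\mu_W$ and, since $\mu_0=\mu_W+\mu_W^-$, $\int_0^{2\pi}\mu_W^-=\int_0^{2\pi}\mu_0-\int_0^{2\pi}\mu_W=C(co(W_0))-C(co(W))$; therefore $N(r,1/f)=T(r,f)-m(r,1/f)+O(1)=C(co(W))\frac{r^t}{2\pi}+o(r^t)$, which is the claimed identity.

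The main obstacle is the rigorous control of the critical rays, where several exponentials have equal modulus and the lower bound on $|f|$ fails: there $\log^-|f|$ may blow up because the zeros of $f$ cluster along these rays, so their contribution to $m(r,1/f)$ cannot be dismissed merely by the smallness of the angular measure. This is precisely the delicate core of Steinmetz's argument \cite{Steinmetz}: one shows, via the argument principle applied on the good arcs, that the zeros of $f$ accumulate only in thin neighbourhoods of the critical rays and that the winding of $f$ across each such ray contributes the correct density; equivalently, one verifies that $f$ is of completely regular growth of order $t$, which legitimises integrating the signed asymptotics of $\log|f|$ and forces the neighbourhood contributions to be $o(r^t)$. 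Granting this, the two displayed evaluations of the integrals of $\mu_0$ and $\mu_W$ deliver all three conclusions.
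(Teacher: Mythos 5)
The paper offers no proof of this lemma at all: it is stated as a quotation of Steinmetz \cite{Steinmetz}, so your attempt can only be measured against the known argument, of which it is a faithful reconstruction in outline. The reduction to support functions is correct: $\operatorname{Re}(\omega_jz^t)=r^t\operatorname{Re}(\bar\omega_je^{-it\theta})$, so $\mu_0(\theta)=h_{co(W_0)}(t\theta)$ and $\mu_W(\theta)=h_{co(W)}(t\theta)$; Cauchy's formula $\int_0^{2\pi}h_K(\psi)\,d\psi=C(K)$ is translation invariant and degenerates to twice the length for a segment, matching the paper's convention for $C(co(\mathcal{W}))$; the substitution $\psi=t\theta$ is legitimate by $2\pi$-periodicity; and the bookkeeping $\mu_0=\mu_W^{+}$, $\int_0^{2\pi}(\mu_0-\mu_W)\,d\theta=C(co(W_0))-C(co(W))$ is exactly what converts $L^1$-asymptotics of $\log|f(re^{i\theta})|$ into the three displayed conclusions. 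For $T(r,f)$ your argument is essentially complete, since $\log^+|f|\ge 0$ lets you discard bad sets in the lower bound and the upper bound is uniform; one repairable nit is that on a good arc the dominant coefficient $H_{j^*}$ may itself be small at some points (including the arcs where $j^*=0$ and the "dominant term" is $H_0$), which you handle by noting $m(r,1/H_{j^*})\le T(r,H_{j^*})+O(1)=o(r^t)$ and a Chebyshev-type measure estimate, not by pointwise dominance alone.

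The genuine gap is the one you name and then grant yourself: the evaluation of $m(r,1/f)$. Smallness of the angular measure of the neighbourhoods of the critical rays controls $\int\log^{+}|f|$ because that integrand has the uniform majorant $r^t\mu_0(\theta)+o(r^t)$; it controls nothing about $\int\log^{-}|f|$, which has no pointwise majorant near the zeros of $f$, and the zeros of an exponential sum do cluster precisely along those rays. Asserting that $f$ is of completely regular growth, or that "the argument principle contributes the correct density," is the statement to be proved rather than a tool available for free: this is where Steinmetz's paper does its actual work (confinement of the zeros to logarithmic strips about the critical rays, followed by a Jensen--Cartan type estimate on disks covering the strips, giving $\int_{\mathrm{bad}}\log^{-}|f|\,d\theta=o(r^t)$; alternatively one counts the zeros directly, reads off $N(r,1/f)$ from the jumps of $\nu'$, and then recovers $m(r,1/f)$ from the first fundamental theorem instead of the reverse). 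As it stands, your proposal is an accurate, well-organized roadmap whose analytic core --- exactly the part that makes the lemma nontrivial --- is outsourced to the reference it is meant to replace, so it should be regarded as incomplete rather than wrong.
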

Next, we proof the following lemmas which are the core parts of our paper.
\begin{lem}\label{l8}
	Let $f$ be given by (\ref{e1.2}) which is a solution of (\ref{e1.8}) for $n=2$ and $\omega_i\neq 2\omega_j$. If the points $0,\omega_1,\omega_2,\ldots,\omega_m$ are collinear, then $m=1$.
\end{lem}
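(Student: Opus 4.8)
The plan is to turn the functional equation into a one–dimensional additive problem about the leading exponents of the $e^{\omega_j z^{t}}$ and then to contradict the hypothesis $\omega_i\neq2\omega_j$. Throughout write $f=H_0+\sum_{j=1}^{m}H_je^{\omega_jz^{t}}$ as in \eqref{e1.2}, put $\Omega=\{0,\omega_1,\dots,\omega_m\}$, and for an exponential polynomial let $E(\cdot)$ denote its set of leading exponents (coefficients of $z^{t}$). First I would record the exponent structure of each term of $f^{2}+a_{1}f+qe^{Q}L(z,f)=P$. The square gives $E(f^{2})\subseteq\Omega+\Omega$; since delay–differentiation changes only the order–$(<t)$ coefficient of a summand $H_je^{\omega_jz^{t}}$ and not its leading exponent $\omega_j$, we get $E(L(z,f))\subseteq\Omega$, and hence, with $\beta$ the leading coefficient of $Q$ and $\deg Q=\rho(f)=t$ (part (i)), $E(qe^{Q}L)\subseteq\beta+\Omega$. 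Absorbing the order–$(<t)$ factor $e^{Q-\beta z^{t}}$ into the coefficients, the equation becomes $\sum_\nu c_\nu(z)e^{\nu z^{t}}\equiv0$ with each $c_\nu$ of order $<t$ and the $\nu z^{t}$ pairwise distinct pure exponents, so Lemma \ref{l5} yields $c_\nu\equiv0$ for every $\nu$.

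The first consequence is the reduction to a line. As $L(z,f)\not\equiv0$, some exponent $\beta+\omega_{l_0}$ really occurs in $qe^{Q}L$; its coefficient can only be cancelled by contributions from $f^{2}$, $a_1f$ or $P$, all of whose exponents lie on the line $\ell=\mathbb{R}\lambda$ spanned by the collinear points of $\Omega$. Hence $\beta\in\ell$ and \emph{all} exponents live on $\ell$. Projecting $\omega_j=\sigma_j\lambda$, $\beta=\sigma_\ast\lambda$, I set $A=\{0,\sigma_1,\dots,\sigma_m\}\subset\mathbb{R}$; the hypothesis becomes $2\sigma_j\notin A$, and the whole problem is now additive.

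Next I would fix the shape of $A$ and the constant $H_0$. Matching the top exponent $2\max A$, whose $f^{2}$–coefficient is a nonzero square and which (by no doubling) is neither $0$ nor in $A$, forces $\sigma_\ast\ge\max A$; if $A$ straddled $0$, the symmetric argument at $2\min A<0$ would force $\sigma_\ast\le\min A<0$, a contradiction, so all $\sigma_j$ share one sign and, after $\lambda\mapsto-\lambda$, $\min A=0$. Matching the least positive exponent $\sigma_{(1)}=\min_j\sigma_j$ (uniquely represented as $0+\sigma_{(1)}$, and not reached by $qe^{Q}L$ since $\sigma_\ast\ge\max A>\sigma_{(1)}$) gives $H_{(1)}(2H_0+a_1)\equiv0$, i.e.\ $H_0\equiv-\tfrac{a_1}{2}$. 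Writing $f=-\tfrac{a_1}{2}+\Sigma$ with $\Sigma=\sum_{j\ge1}H_je^{\omega_jz^{t}}$ collapses $f^{2}+a_1f$ to $\Sigma^{2}-\tfrac{a_1^{2}}{4}$, and comparing constant terms (the other side has none, as $\sigma_\ast>0$) gives $P\equiv-\tfrac{a_1^{2}}{4}$ and the clean identity $qe^{Q}L(z,f)=-\Sigma^{2}$. Equating exponent sets, $E(\Sigma^{2})=\sigma_\ast+E(L)\subseteq\sigma_\ast+A$. The extreme sums $2\sigma_{(1)}$, $\sigma_{(1)}+\sigma_{(2)}$, $2\sigma_{(m)}$ have unique representations, hence nonzero coefficients in $\Sigma^{2}$, so all three lie in $\sigma_\ast+A$; from $2\sigma_{(m)}$ I recover $\sigma_\ast\ge\sigma_{(m)}$, then $2\sigma_{(1)}-\sigma_\ast\in A\cap[0,\sigma_{(1)})=\{0\}$ gives $\sigma_\ast=2\sigma_{(1)}$, and finally $\sigma_{(1)}+\sigma_{(2)}-\sigma_\ast=\sigma_{(2)}-\sigma_{(1)}\in A\cap(0,\sigma_{(2)})=\{\sigma_{(1)}\}$ yields $\sigma_{(2)}=2\sigma_{(1)}$, i.e.\ $\omega_{(2)}=2\omega_{(1)}$. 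This contradicts $\omega_i\neq2\omega_j$ as soon as $m\ge2$, so $m=1$.

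I expect the main obstacle to be the careful handling of $qe^{Q}L$: one must check that delay–differentiation keeps the leading exponents inside $\Omega$ and that the residual factor $e^{Q-\beta z^{t}}$ is of order $<t$, so that the regrouping and Lemma \ref{l5} are legitimate; and, since the coefficients of $\Sigma^{2}$ may undergo heavy cancellation, the concluding contradiction must be drawn \emph{only} from the extreme sums $2\sigma_{(1)},\sigma_{(1)}+\sigma_{(2)},2\sigma_{(m)}$, whose coefficients are guaranteed nonzero by uniqueness of representation.
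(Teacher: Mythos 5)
Your argument is correct, and it rests on the same two pillars as the paper's proof: the Borel--type Lemma~\ref{l5} applied after writing every term as (order $<t$ coefficient)$\times e^{\nu z^{t}}$, and the observation that collinearity of $0,\omega_1,\dots,\omega_m$ lets one totally order the frequencies and compare extremal exponents. Where you genuinely diverge is in the organization of the combinatorics. The paper writes $\omega_i=\xi_i\omega$, splits into $\xi_m>0$ versus $\xi_m<0$, and then runs an exhaustive case analysis over which of the $\mathcal{A}_h$ vanish and over the possible locations of $v_t$ (Cases 1.1--1.2.3 and 2.1--2.2.3), each branch ending in $H_1^2\equiv0$, $H_m^2\equiv0$ or $q\equiv0$. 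You instead first extract structural identities valid in all cases --- $\sigma_*\ge\max A$ from the top exponent, then $H_0\equiv-\tfrac{a_1}{2}$, $P\equiv-\tfrac{a_1^2}{4}$ and the clean relation $q e^{Q}L(z,f)=-\Sigma^2$ --- and finish with a short additive argument on the real projections using the three uniquely represented sums $2\sigma_{(1)}$, $\sigma_{(1)}+\sigma_{(2)}$, $2\sigma_{(m)}$. This buys you two things: you never need to distinguish which $\mathcal{A}_h$ vanish (you only use that the exponent set of $qe^{Q}L$ is contained in $\sigma_*+A$ and that uniquely represented sums in $\Sigma^{2}$ cannot cancel), and your sign discussion explicitly disposes of the configuration in which the $\sigma_j$ have mixed signs, which the paper's Case 1.2.1.2 implicitly excludes by writing $0=\xi_0<\xi_1<\cdots<\xi_m$ and whose conclusion ($q e^{Q_{t-1}}\mathcal{A}_m\equiv0$ forcing $q\equiv0$) tacitly assumes $\mathcal{A}_m\not\equiv0$. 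Your closing remark about drawing the contradiction only from extreme sums is exactly the right precaution, since interior coefficients of $\Sigma^{2}$ can indeed cancel. One small point of hygiene: in the sentence ``Matching the top exponent $2\max A$'' you should note that the nonzero-square claim needs $\max A$ to be attained by some $\sigma_{j}$ with $j\ge1$, i.e.\ that some $\sigma_j$ is positive; as written the max- and min-arguments should be invoked according to which of $\max A>0$, $\min A<0$ actually holds, which is how you in fact use them.
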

\begin{proof} [\bf\underline{Proof}]
	Assume on the contrary to the assertion that $m\geq 2$. For each $i \in\{1,2,\ldots, m\}$, we may write $\omega_i=\xi_i\omega$, where the constants $\xi_i\in\mathbb{C}\backslash\{0\}$ are distinct, $\xi_0=0$ and $\omega\in\mathbb{C}\backslash \{0\}$. Moreover, we may suppose that $\xi_i>\xi_j$ for $i >j$. Equation (\ref{e1.8}) can be written as \bea\label{e2.1} &&\sum_{i,j=0}^{m} H_i(z)H_j(z)e^{(\xi_i+\xi_j)\omega z^t}+a_1\sum_{l=0}^{m}H_l(z)e^{\xi_l\omega z^t}\nonumber\\&&\qquad+q(z)e^{Q_{t-1}(z)}\left[{\mathcal{A}_0(z)e^{v_tz^t}}+\sum_{h=1}^{m}\mathcal{A}_h(z)e^{(v_t+\xi_h\omega) z^t}\right]=P(z),\eea where $Q_{t-1}(z) =Q(z)-v_tz^t$ with $\deg Q_{t-1}(z)\leq t-1$ and $\mathcal{A}_0(z)=\sum_{i=0}^{k}b_iH^{(r_i)}_0(z+c_i)$, $\mathcal{A}_h(z)=\sum_{i=0}^{k}b_i\tilde{H}_h(z+c_i)e^{\omega_h(z+c_i)^t-\omega_hz^t}$, $h=1,2,\ldots,m$ such that $\tilde{H}_h(z+c_i)$ are the delay-differential polynomial of $H_h(z)$.\\
	%\beas \tilde{H}_h(z+c_i) &=& H_h(z+c_i)M_{r_i}(g'(z+c_i),g''(z+c_i),\ldots,g^{(r_i)}(z+c_i))\hspace{6cc}\\&&+H_h'(z+c_i)M_{r_i-1}(g'(z+c_i),g''(z+c_i),\ldots,g^{(r_i-1)}(z+c_i)) \hspace{6cc}\\&&+ \cdots +  H_h^{(r_i-1)}(z+c_i)M_1(g'(z+c_i)) + H_h^{(r_i)}(z+c_i),\hspace{6cc}\eeas
	Now we consider following two cases to derive contradiction.\\
	
	{\bf{\underline{Case 1.}}} Let $\xi_m>0$. Note that $ \max\{\xi_i+\xi_j:i,j=0,1,\ldots,m\}=2\xi_m$. % and $$\min\{\{\xi_i+\xi_j:i,j=0,1,\ldots,m\}-\{0\}\}=\left\{\begin{array}{ll}\xi_1,\; \xi_1>0, \\ 2\xi_1,\; \xi_1<0. \end{array}\right.$$
	Since $L(z,f)\not\equiv 0$, then at least one of $\mathcal{A}_h(z)$, $h=0,1,\ldots,m$ is not vanishing.\\
	{\bf{\underline{Case 1.1.}}} Let all $\mathcal{A}_h(z)=0$, $h=1,2,\ldots,m$. Then $\mathcal{A}_0(z)\not\equiv 0$., i.e., $H_0(z)\not\equiv 0$. If $2\xi_m\omega\neq v_t$, applying {\em Lemma \ref{l5}} on (\ref{e2.1}), we obtain $H_m^2(z)\equiv 0$, a contradiction. Next, let $2\xi_m\omega= v_t$. Since, $\omega_i\neq 2\omega_j$, applying {\em Lemma \ref{l5}} on (\ref{e2.1}), we obtain $H_1^2(z)\equiv 0$, a contradiction.\\
	{\bf{\underline{Case 1.2.}}} Let at least one of $\mathcal{A}_h(z)\not=0$, for $h=1,2,\ldots,m$.\\
	{\bf{\underline{Case 1.2.1.}}} Let $\mathcal{A}_0(z)\neq 0$. Then by {\em Lemma \ref{l5}}, from (\ref{e2.1}), there exists one $h_0\in\{0,1,\ldots,m\}$ such that $2\xi_m\omega=v_t+\xi_{h_0}\omega$. Otherwise, we have $H_m^2(z)\equiv 0$, a contradiction.\\
	{\bf{\underline{Case 1.2.1.1.}}} If $h_0=m$, then we have $v_t=\xi_m\omega$. %Then (\ref{e2.1}) reduces to \beas &&\sum_{i,j=0}^{m} H_i(z)H_j(z)e^{(\xi_i+\xi_j)\omega z^t}+a_1\sum_{l=0}^{m}H_l(z)e^{\xi_l\omega z^t}\nonumber\\&&\qquad+q(z)e^{Q_{t-1}(z)}\left[{\mathcal{A}_0(z)e^{\xi_mz^t}}+\sum_{h=1}^{m}\mathcal{A}_h(z)e^{(\xi_m+\xi_h)\omega z^t}\right]=P(z).\eeas 
	Since $2\xi_i\neq \xi_j$, $j=0,1, \ldots, m$ and $2\xi_1\not\in\{\xi_i+\xi_j:0\leq i,j\leq m,(i,j)\neq(1,1)\}$ and $2\xi_1\not\in\{\xi_m+\xi_i:i=0,1,\ldots,m\}$. By {\em Lemma \ref{l5}}, we obtain $H_1^2(z)\equiv 0$, a contradiction.\\	
	{\bf{\underline{Case 1.2.1.2.}}} If $h_0\in\{0,1,\ldots,m-1\}$, since $0=\xi_0<\xi_1<\xi_2<\cdots<\xi_{m-1}<\xi_m$ and $2\xi_i\neq \xi_j$,  $i,j=0,1, \ldots, m$, then for $m >h_0$, \beas 2\xi_m-\xi_{h_0}+\xi_m>\max\{2\xi_m-\xi_{h_0}+\xi_i:i=0,1,\ldots,m-1\}.\eeas Also, $2\xi_m=\max\{\xi_i+\xi_j:i,j=0,1,\ldots,m\}$. In view of {\em Lemma \ref{l5}}, we obtain $q(z)e^{Q_{t-1}(z)}\mathcal{A}_m\equiv 0$, i.e., $q(z)\equiv 0$, a contradiction.\\
	{\bf{\underline{Case 1.2.2.}}} Let $\mathcal{A}_0(z)= 0$ and $H_0(z)\neq 0$. Then (\ref{e2.1}) becomes \bea\label{e2.2} \sum_{i,j=0}^{m} H_i(z)H_j(z)e^{(\xi_i+\xi_j)\omega z^t}+a_1\sum_{l=0}^{m}H_l(z)e^{\xi_l\omega z^t}+q(z)e^{Q_{t-1}(z)}\sum_{h=1}^{m}\mathcal{A}_h(z)e^{(v_t+\xi_h\omega) z^t}=P(z).\eea Then similar as {\em Case 1.2.1}, by {\em Lemma \ref{l5}}, from (\ref{e2.2}), we have there exists one $h_0\in\{1,2,\ldots,m\}$ such that $2\xi_m\omega=v_t+\xi_{h_0}\omega$ and proceeding similarly as done in {\em Case 1.2.1}, we can get a contradiction.\\	
	{\bf{\underline{Case 1.2.3.}}} Let $\mathcal{A}_0(z)= 0$ and $H_0(z)= 0$. Then (\ref{e2.1}) becomes \bea\label{e2.3} \sum_{i,j=1}^{m} H_i(z)H_j(z)e^{(\xi_i+\xi_j)\omega z^t}+a_1\sum_{l=1}^{m}H_l(z)e^{\xi_l\omega z^t}+q(z)e^{Q_{t-1}(z)}\sum_{h=1}^{m}\mathcal{A}_h(z)e^{(v_t+\xi_h\omega) z^t}=P(z).\eea Next, similar as {\em Case 1.2.1}, by {\em Lemma \ref{l5}}, from (\ref{e2.3}), we have there exists one $h_0\in\{1,2,\ldots,m\}$ such that $2\xi_m\omega=v_t+\xi_{h_0}\omega$ and adopting the same method as done in {\em Case 1.2.1}, we get a contradiction.\\
	
	%{\bf{\underline{Case 1.2.3.1.}}} If $h_0=m$, the using similar method as done in {\em Case 1.2.1.1}, we get $H_1(z)\equiv 0$, a contradiction.\\
	%{\bf{\underline{Case 1.2.3.2.}}} If $h_0\in\{1,2,\ldots,m-1\}$, then using the fact $\xi_1<\xi_2<\cdots<\xi_{m-1}<\xi_m$, $2\xi_i\neq \xi_j$,  $j=0,1, \ldots, m$ and $2\xi_1=\min\{\xi_i+\xi_j:1\leq i,j\leq m\}$, we have \beas 2\xi_1<\min\{2\xi_m-\xi_{h_0}+\xi_i:i=1,2,\ldots,m\}.\eeas By {\em Lemma \ref{l5}}, we obtain $H_1(z)\equiv 0$, a contradiction.\\
	{\bf{\underline{Case 2.}}} $\xi_m<0$. Note that $\min\{\xi_i+\xi_j:i,j=0,1,\ldots,m\}=2\xi_1.$
	Similar as {\em Case 1}, we divide the following cases.\\
	{\bf{\underline{Case 2.1.}}} Let all $\mathcal{A}_h(z)=0$, $h=1,2,\ldots,m$. Then $\mathcal{A}_0(z)\not\equiv 0$., i.e., $H_0(z)\not\equiv 0$. If $2\xi_1\omega\neq v_t$, applying {\em Lemma \ref{l5}} on (\ref{e2.1}), we obtain $H_1^2(z)\equiv 0$, a contradiction. Next, let $2\xi_1\omega= v_t$. Since, $\omega_i\neq 2\omega_j$, applying {\em Lemma \ref{l5}} on (\ref{e2.1}), we obtain $H_m^2(z)\equiv 0$, a contradiction.\\
	{\bf{\underline{Case 2.2.}}} Let at least one of $\mathcal{A}_h(z)\neq0$ for $h=1,2,\ldots,m$.\\
	{\bf{\underline{Case 2.2.1.}}} Let $\mathcal{A}_0(z)\neq 0$. Then by {\em Lemma \ref{l5}}, there exists one $h_0\in\{0,1,\ldots,m\}$ such that $2\xi_1\omega=v_t+\xi_{h_0}\omega$. Otherwise, we have $H_1^2(z)\equiv 0$, a contradiction.\\
	{\bf{\underline{Case 2.2.1.1.}}} If $h_0=1$, then we have $v_t=\xi_1\omega$.\\ Since, $2\xi_i\neq\xi_j$ and $2\xi_m\not\in\{\xi_i+\xi_j:0\leq i,j\leq m, (i,j)\neq (m,m)\}$ and $2\xi_m\not\in\{\xi_1+\xi_i:i=0,2,\ldots,m\}$. By {\em Lemma \ref{l5}}, we obtain $H_m^2(z)\equiv 0$, a contradiction.\\	
	{\bf{\underline{Case 2.2.1.2.}}} If $h_0\in\{0,2,3,\ldots,m\}$, since $0=\xi_0<\xi_1<\xi_2<\cdots<\xi_{m-1}<\xi_m$  and $2\xi_i\neq \xi_j$,  $i,j=0,1, \ldots, m$, then \beas 2\xi_1-\xi_{h_0}+\xi_1<\min\{2\xi_1-\xi_{h_0}+\xi_i:i=0,2,3,\ldots,m\}.\eeas Also, $\min\{\xi_i+\xi_j:i,j=0,1,\ldots,m\}=2\xi_1$. In view of {\em Lemma \ref{l5}}, we obtain $q(z)e^{Q_{t-1}}\mathcal{A}_1(z)\equiv 0$, i.e., $q(z)\equiv 0$, a contradiction.\\
	{\bf{\underline{Case 2.2.2.}}} Let $\mathcal{A}_0(z)= 0$ and $H_0(z)\neq 0$. Then in this case, we get equation (\ref{e2.2}). Similar as {\em Case 2.2.1}, by {\em Lemma \ref{l5}}, there exists one $h_0\in\{2,3,\ldots,m\}$ such that $2\xi_1\omega=v_t+\xi_{h_0}\omega$ and proceeding similarly as adopted in {\em Case 2.2.1}, we can get a contradiction.\\
	{\bf{\underline{Case 2.2.3.}}} Let $\mathcal{A}_0(z)= 0$ and $H_0(z)= 0$. Then, we have equation (\ref{e2.3}). Similar as {\em Case 2.2.1}, by {\em Lemma \ref{l5}}, there exists one $h_0\in\{2,3,\ldots,m\}$ such that $2\xi_1\omega=v_t+\xi_{h_0}\omega$. Next, adopting the same method as executing in {\em Case 2.2.1}, we get a contradiction.
	%{\bf{\underline{Case 2.2.3.1.}}} If $h_0=1$, the using similar method as done in {\em Case 2.2.1.1}, we get $H_m(z)\equiv 0$, a contradiction.\\
	%{\bf{\underline{Case 2.2.3.2.}}} If $h_0\in\{2,3,\ldots,m\}$, using the fact $\max\{\{\xi_i+\xi_j:i,j=0,1,\ldots,m\}-\{0\}\}=\xi_m$, $2\xi_i\neq \xi_j$, $j=0,1,\ldots, m$, we have \beas 2\xi_m>\max\{2\xi_1-\xi_{h_0}+\xi_i:i=1,2,\ldots,m\}.\eeas By {\em Lemma \ref{l5}}, we obtain $H_m(z)\equiv 0$, a contradiction.
\end{proof}
\begin{lem}\label{l9}
	If $m\geq 2$ and $\omega_i\neq 2\omega_j$ for any $i\neq j$, then $f$ of the form (\ref{e1.2}) is not a solution of (\ref{e1.8}) for $n=2$.
\end{lem}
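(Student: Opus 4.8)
The plan is to argue by contradiction: suppose some $f$ of the form (\ref{e1.2}) with $m\geq 2$ and $\omega_i\neq 2\omega_j$ solves (\ref{e1.8}) for $n=2$. First I would remove the degenerate configuration. If the points $0,\omega_1,\ldots,\omega_m$ were collinear, then \emph{Lemma \ref{l8}} would force $m=1$, contrary to $m\geq 2$. Hence, writing $\Omega=\{0,\omega_1,\ldots,\omega_m\}\subset\mathbb{C}\cong\mathbb{R}^2$, the convex hull $co(\Omega)$ has non-empty interior, so it has at least three vertices and therefore at least two \emph{nonzero} vertices. Next I would expand the equation exactly as in (\ref{e2.1}) and collect all terms according to the frequency $\gamma$ of their exponential $e^{\gamma z^t}$: the square $f^2$ contributes the frequencies $\omega_i+\omega_j$, the linear part $a_1f$ contributes $\omega_l$, the term $q(z)e^{Q(z)}L(z,f)$ contributes $v_t+\omega_h$ (here $\omega_0=0$ and $v_t$ is the leading coefficient of $Q$), and $P(z)$ sits at frequency $0$. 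Since $\deg Q_{t-1}\leq t-1$, each resulting coefficient, being a finite combination of the $H_iH_j$, $a_1H_l$ and $q\,e^{Q_{t-1}}\mathcal{A}_h$, has order strictly less than $t$, while the difference of any two distinct frequencies times $z^t$ has order $t$. Thus \emph{Lemma \ref{l5}}, applied after grouping equal frequencies, forces the total coefficient of every frequency to vanish identically.

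The heart of the matter is to exhibit one frequency at which this is impossible. All frequencies of $f^2$ lie in $co(\Omega)+co(\Omega)=2\,co(\Omega)$, whose vertices are precisely the points $2\omega_v$ with $\omega_v$ a vertex of $co(\Omega)$; at such a vertex the representation $\omega_i+\omega_j=2\omega_v$ is forced to be $(i,j)=(v,v)$, so in the $f^2$-sum the frequency $2\omega_v$ is carried by $H_v^2$ alone. Because $\omega_v\neq 0$, the hypothesis $\omega_i\neq 2\omega_j$ also shows that no $\omega_l$ equals $2\omega_v$, so $a_1f$ and $P$ contribute nothing there. Consequently the total coefficient at $2\omega_v$ equals $H_v^2$ \emph{provided} $2\omega_v$ avoids the set $C=\{v_t+\omega_h:0\leq h\leq m\}$ of frequencies coming from $q(z)e^{Q(z)}L(z,f)$. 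I would therefore establish the purely geometric claim that some nonzero vertex $\omega_{i_0}$ of $co(\Omega)$ satisfies $2\omega_{i_0}\notin C$, whence \emph{Lemma \ref{l5}} gives $H_{i_0}^2\equiv 0$, contradicting $H_{i_0}\not\equiv 0$ in the normalized form (\ref{e1.2}).

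I expect this geometric claim to be the main obstacle, and I would settle it with a support-function argument. Suppose, to the contrary, that $2\omega_v-v_t\in\Omega$ for every nonzero vertex $\omega_v$. Let $h(\theta)=\max_{w\in co(\Omega)}\langle w,\theta\rangle$ denote the support function, where $\langle\cdot,\cdot\rangle$ is the real inner product. Choosing a direction $\theta$ in which $\omega_v$ is the maximizer, the inclusion $2\omega_v-v_t\in\Omega\subseteq co(\Omega)$ gives $2h(\theta)-\langle v_t,\theta\rangle\leq h(\theta)$, that is, $\langle v_t,\theta\rangle\geq h(\theta)$. Since $0\in\Omega$, the maximizer fails to be a nonzero vertex only for $\theta$ in the normal cone of $0$, which is a proper cone; hence one can pick $\theta_0$ so that both $\theta_0$ and $-\theta_0$ have nonzero maximizers. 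The inequality then holds at $\theta_0$ and at $-\theta_0$, and adding the two yields $h(\theta_0)+h(-\theta_0)\leq 0$, contradicting the fact that $h(\theta_0)+h(-\theta_0)$ is the positive width of $co(\Omega)$ in direction $\theta_0$. This contradiction produces the desired vertex $\omega_{i_0}$ and completes the proof.
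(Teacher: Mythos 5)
Your proof is correct, but it takes a genuinely different route from the paper's. The paper, after substituting $f$ into the equation as in (\ref{e2.4})--(\ref{e2.5}), argues through Nevanlinna theory: it computes $N\bigl(r,\frac{1}{F}\bigr)$ for $F=f^2+a_1f-P=-qe^{Q}L(z,f)$ twice via Steinmetz's Lemma \ref{l7}, once from each representation, and derives contradictions from inequalities among the circumferences $C(co(\cdot))$ of the hulls $W_0$, $V$, $V_0$ and the auxiliary sets $X_1,\dots,X_4$, the decisive step (after invoking Lemma \ref{l8} to exclude collinearity, exactly as you do) being the perimeter estimate $C(co(X_3))>C(co(W_0))$ obtained from the corner points of $co(W_0)$. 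You instead stay entirely at the level of Borel-type coefficient comparison: grouping (\ref{e2.1}) by frequencies $\gamma z^{t}$, applying Lemma \ref{l5}, and isolating the single frequency $2\omega_{i_0}$ for a nonzero vertex $\omega_{i_0}$ of $co(\{0,\omega_1,\dots,\omega_m\})$, whose total coefficient reduces to $H_{i_0}^2$ once your support-function argument shows $2\omega_{i_0}$ avoids $\{v_t+\omega_h\}$. The ingredients are sound: extremality forces the unique representation $2\omega_{i_0}=\omega_{i_0}+\omega_{i_0}$; the hypothesis $\omega_i\neq 2\omega_j$ removes the linear terms; and antipodal directions $\pm\theta_0$ with nonzero-vertex maximizers exist because the normal cone of the vertex $0$ has angle less than $\pi$, so adding $\langle v_t,\pm\theta_0\rangle\geq h(\pm\theta_0)$ contradicts the positive width. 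Your approach buys a shorter, more self-contained argument that bypasses Lemma \ref{l7} and the paper's case analysis on whether $\mathcal{A}_0$, $H_0$ and $G$ vanish; the paper's approach reuses machinery (characteristic and counting functions of exponential polynomials) that it needs elsewhere anyway. One point to make fully explicit in a final write-up: each grouped coefficient (the products $H_iH_j$ and the terms $q e^{Q_{t-1}}\mathcal{A}_h$, the latter containing $e^{\omega_h(z+c_i)^t-\omega_h z^t}$ of degree $t-1$ in the exponent) has order at most $t-1$, which is precisely the hypothesis required to apply Lemma \ref{l5}; you assert this correctly but it is the step that must not be stated loosely.
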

\begin{proof} [\bf\underline{Proof}]
	Suppose on the contrary to the assertion that, $m\geq 2$. Substituting $f$ of the form (\ref{e1.2}) into (\ref{e1.8}), we get \bea\label{e2.4}F(z)&=&f^2(z)+a_1f(z)-P(z)\nonumber\\&=& G(z)+\sum_{\scriptsize{ {\begin{array}{clcr}i,j=0\\\omega_i+\omega_j\neq0\end{array}}}}^{m} H_i(z)H_j(z)e^{(\omega_i+\omega_j)z^t}+a_1\sum_{l=1}^{m}H_l(z)e^{\omega_lz^t},\eea where $G(z)=H_0(z)(H_0(z)+a_1)-P(z)$ is either an exponential polynomial of degree $<t$ or a polynomial in $z$.\\Therefore, also
	\bea\label{e2.5}F(z)=-q(z)e^{Q(z)}L(z,f)=-q(z)e^{Q(z)}\sum_{h=0}^{m}\mathcal{A}_h(z)e^{\omega_h z^t},\hspace{4.1cc}\eea such that $\mathcal{A}_h(z)$ is defined as in(\ref{e2.1}).\\
	Now we set \beas X_1&=&\{\ol\omega_1,\ldots,\ol\omega_m,\ol\omega_i+\ol\omega_j:\ol\omega_i+\ol\omega_j\neq 0,i,j=1,\ldots,m\},\\ X_2&=& \{\ol\omega_1,\ldots,\ol\omega_m,2\ol\omega_1,\ldots,2\ol\omega_m\},\\ X_3&=&\{2\ol\omega_1,\ldots,2\ol\omega_m\}.\eeas
	Clearly, by the theory of convexity, we have $\ol\omega_i+\ol\omega_j=\frac{1}{2}\cdot2\ol\omega_i+(1-\frac{1}{2})\cdot2\ol\omega_j$, i.e., $co(X_1)=co(X_2)$. Since, $X_3\subset X_2$, we have $co(X_3)\leq co(X_2)$, respectively.\\
	Next, we consider the following cases to show a contradiction.\\
	{\bf{\underline{Case 1.}}} If all $\mathcal{A}_h(z)=0$ for $h=1,\ldots,m$, then we have $\mathcal{A}_0(z)\neq 0$, which implies $H_0(z)\neq 0$. Then (\ref{e2.5}) becomes $F(z)=-q(z)e^{Q(z)}\mathcal{A}_0(z)$. Then applying {\em Lemma \ref{l7}}, we get \bea\label{e2.6} N\left(r,\frac{1}{F(z)}\right)=N\left(r,\frac{1}{\mathcal{A}_0(z)}\right)=o(r^t).\eea
	{\bf{\underline{Sub-case 1.1.}}} Let $G(z)\equiv 0$. Applying {\em Lemma \ref{l7}} on (\ref{e2.4}), we have \bea\label{e2.7} N\left(r,\frac{1}{F(z)}\right)=C(co(X_1))\frac{r^t}{2\pi}+o(r^t).\eea
	Therefore, (\ref{e2.6}) and (\ref{e2.7}) yields a contradiction.\\
	{\bf{\underline{Sub-case 1.2.}}} Let $G(z)\not\equiv 0$. Applying {\em Lemma \ref{l7}} on (\ref{e2.4}), we have $m\left(r,\frac{1}{F(z)}\right)=o(r^t)$ and then \bea\label{e2.8} m\left(r,\frac{1}{F(z)}\right)+N\left(r,\frac{1}{F(z)}\right)&=&T(r,F(z))+O(1)=2T(r,f(z))+S(r,f)\nonumber\\&=&2\left(C(co(W_0))\frac{r^t}{2\pi}+o(r^t)\right)+S(r,f)\nonumber\\\implies N\left(r,\frac{1}{F(z)}\right)&=&2C(co(W_0))\frac{r^t}{2\pi}+o(r^t).\eea
	Therefore, using (\ref{e2.6}) and (\ref{e2.8}), we get a contradiction.\\
	{\bf{\underline{Case 2.}}} Let there exists some $h_0\in\{1,2,\ldots,m\}$ such that $\mathcal{A}_{h_0}(z)\neq 0$. Now, we denote the following set as \beas V=\{\ol\omega_{h_0}:h_0\in\{1,2,\ldots,m\}\text{ for which }\mathcal{A}_{h_0}(z)\neq 0\}\text{ and }V_0=V\cup\{0\}.\eeas Since, $V\subseteq W$ and $V_0\subseteq W_0$, then $C(co(V))\leq C(co(W))$ and $C(co(V_0))\leq C(co(W_0))$, respectively.\\
	{\bf{\underline{Case 2.1.}}} Let $\mathcal{A}_0(z)\equiv 0$ and $H_0(z)\equiv 0$. Then using {\em Lemma \ref{l7}} on (\ref{e2.5}), we have \bea\label{e2.9} N\left(r,\frac{1}{F(z)}\right)=N\left(r,\frac{1}{L(z,f)}\right)+O(\log r)=C(co(V))\frac{r^t}{2\pi}+o(r^t).\eea
	{\bf{\underline{Case 2.1.1.}}} If $G(z)\not\equiv 0$. Then similar as {\em Sub-case 1.2}, we have equation (\ref{e2.8}). From (\ref{e2.8}) and (\ref{e2.9}), we obtain a contradiction by $C(co(W_0))\geq C(co(V_0))\geq C(co(V))=2C(co(W_0))$.\\ %$$N\left(r,\frac{1}{F(z)}\right)=2C(co(W_0))\frac{r^t}{2\pi}+o(r^t),$$ which is a contradiction by $C(co(W_0))\geq C(co(V_0))\geq C(co(V))=2C(co(W_0))$.\\
	 {\bf{\underline{Case 2.1.2.}}} If $G(z)\equiv 0$, using {\em Lemma \ref{l7}} on (\ref{e2.4}), we have equation (\ref{e2.7}). %\beas N\left(r,\frac{1}{F(z)}\right)=C(co(X_1))\frac{r^t}{2\pi}+o(r^t).\eeas
	 Using (\ref{e2.7}) and (\ref{e2.9}), from $C(co(W))\geq C(co(V))=C(co(X_1))=C(co(X_2))\geq C(co(X_3))=2C(co(W))$, we get a contradiction.\\
	{\bf{\underline{Case 2.2.}}} Let $\mathcal{A}_0(z)\equiv 0$ and $H_0(z)\not\equiv 0$. Then proceeding similarly as done in {\em Case 2.1}, we get a contradiction.\\%using {\em Lemma \ref{l7}} on (\ref{e2.5}), again we get (\ref{e2.9}).\\
	%{\bf{\underline{Case 2.2.1.}}} If $G(z)\not\equiv 0$. Then similar as {\em Sub-case 1.2}, we have equation (\ref{e2.8}). %%$$N\left(r,\frac{1}{F(z)}\right)=2C(co(W_0))\frac{r^t}{2\pi}+o(r^t),$$	From (\ref{e2.8}) and (\ref{e2.9}), we obtain a contradiction by $C(co(W_0))\geq C(co(V_0))\geq C(co(V))=2C(co(W_0))$.\\
	%{\bf{\underline{Case 2.2.2.}}} If $G(z)\equiv 0$. Then using {\em Lemma \ref{l7}} on (\ref{e2.4}), we have equation (\ref{e2.7}). By (\ref{e2.7}) and (\ref{e2.9}), we note that $C(co(W))\geq C(co(V))=C(co(X_1))=C(co(X_2))\geq C(co(X_3))=2C(co(W))$, we get a contradiction.\\
	{\bf{\underline{Case 2.3.}}} Let $\mathcal{A}_0(z)\neq 0$, which implies $H_0(z)\not\equiv 0 $. Then using {\em Lemma \ref{l7}} on (\ref{e2.5}), we have $m\left(r,\frac{1}{L(z,f)}\right)=o(r^t)$ and then \bea\label{e2.10} N\left(r,\frac{1}{F(z)}\right)&=&N\left(r,\frac{1}{L(z,f)}\right)+O(\log r)\nonumber\\&=&T(r,L(z,f))+o(r^t)=C(co(V_0))\frac{r^t}{2\pi}+o(r^t).\eea
	{\bf{\underline{Case 2.3.1.}}} If $G(z)\not\equiv 0$. Then, from (\ref{e2.8}) and (\ref{e2.10}), we get $C(co(W_0))\geq C(co(V_0))=2C(co(W_0))$, a contradiction.\\
	{\bf{\underline{Case 2.3.2.}}} If $G(z)\equiv 0$. Using (\ref{e2.7}) and (\ref{e2.10}), we get \bea\label{e2.11}C(co(V_0))=C(co(X_1)).\eea
	Now, since $m\geq 2$, by {\em Lemma \ref{l8}}, $co(W_0)$ can not be a line-segment. Therefore, $co(W_0)$ must be a polygon with non-empty interior. If $0$ is not a boundary point of $co(W_0)$, then we have $co(W_0) = co(W)$. Then we have $C(co(W))=C(co(W_0))\geq C(co(V_0))=C(co(X_1))=C(co(X_2))\geq C(co(X_3))=2C(co(W))$, a contradiction. So, $0$ is  a boundary point of $co(W_0)$. We choose the other non-zero corner points of $co(W_0)$ among the points $\ol\omega_1,\ldots,\ol\omega_m$ are $u_1,\ldots,u_t$, $t\leq m$ such that $0\leq \arg(u_i)\leq\arg(u_{i+1})\leq2\pi$ for $1\leq i\leq t-1$. Hence, \bea\label{e2.12} C(co(W_0))=|u_1|+|u_2-u_1|+\cdots+|u_t-u_{t-1}|+|u_t|. \eea Let $X_4=\{u_1,2u_1,2u_2,\ldots,2u_t,u_t\}$. Therefore, the points $2u_1,2u_2,\ldots,2u_t$ are the corner points of $co(X_4)$. However, since $t\leq m$, $co(X_4)$ may have more corner points. Then, using (\ref{e2.12}), we have  \beas C(co(X_3))>C(co(X_4))&>&|2u_1-u_1|+|2u_2-2u_1|+\cdots+|2u_t-2u_{t-1}|+|u_t-2u_t|\nonumber\\&=& |u_1|+2|u_2-u_1|+\cdots+2|u_t-u_{t-1}|+|u_t|\nonumber\\&>&|u_1|+|u_2-u_1|+\cdots+|u_t-u_{t-1}|+|u_t|\nonumber\\&=&C(co(W_0))\eeas
	Therefore, $$C(co(X_1))=C(co(X_2))\geq C(co(X_3))>C(co(W_0))\geq C(co(V_0)),$$ which contradicts (\ref{e2.11}).\\
	Hence, the proof is completed.
\end{proof}
\begin{lem}\label{l10}
	Let $f$ be given by (\ref{e1.2}), which is a solution of (\ref{e1.8}) for $n=2$, then $f$ takes the form, $$f(z) = H_0(z) + H_1(z)e^{\omega_1z^t},$$% where $H_0(z)$ and $H_1(z)$ are polynomials, i.e., $f\in\Gamma_1'$. Moreover, $\rho(f)=1$ and $Q(z)$ is a polynomial of degree $1$.	 $f$ takes the form $f(z) = H_0(z) + H_1(z)e^{\omega_1z^t}$,
	 i.e., $f\in\Gamma_1'$. In this case,
	\begin{itemize}
		\item [(I)] either $t=1$, $\rho(f)=1$ and $H_0(z)$, $H_1(z)$ are polynomials and $Q(z)$ is a polynomial of degree $1$
		\item [(II)] or $H_0(z)=-\frac{a_1}{2}$, $P(z)=-\frac{a_1^2}{4}$, $H_1^2(z)=\frac{b_0a_1}{2}q(z)e^{Q_{t-1}(z)}$ and $L(z,f)=b_0H_0(z)$
		\item [(III)] or $H_0(z)=-\frac{a_1}{2}$, $P(z)=-\frac{a_1^2}{4}$, $H_1^2(z)=-q(z)e^{Q_{t-1}(z)}\mathcal{A}_1(z)$ and $L(z,f)=\mathcal{A}_1(z)e^{\omega_1z^t}$, where  	 $\mathcal{A}_1(z)=\sum_{i=0}^{k}b_i\tilde{H}_1(z+c_i)e^{\omega_1(z+c_i)^t-\omega_1z^t}$ such that $\tilde{H}_1(z+c_i)$ are the delay-differential polynomial of $H_1(z)$.
	\end{itemize}
\end{lem}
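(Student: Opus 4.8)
The plan is to substitute the single-exponential normalized form $f(z)=H_0(z)+H_1(z)e^{\omega_1 z^t}$ (with $H_1\not\equiv0$, since $\rho(f)=\deg Q\geq1$ forces $f$ to be transcendental) into (\ref{e1.8}) with $n=2$, and to read off the structure by separating exponentials. Writing $Q(z)=v_tz^t+Q_{t-1}(z)$ with $v_t\neq0$ and $\deg Q_{t-1}\leq t-1$, and using $L(z,f)=\mathcal{A}_0(z)+\mathcal{A}_1(z)e^{\omega_1 z^t}$ with $\mathcal{A}_0,\mathcal{A}_1$ exactly as in (\ref{e2.1}), the equation becomes a sum of five terms whose exponentials are $e^{0},e^{\omega_1 z^t},e^{2\omega_1 z^t},e^{v_tz^t},e^{(v_t+\omega_1)z^t}$, with respective coefficients $H_0^2+a_1H_0-P$, $H_1(2H_0+a_1)$, $H_1^2$, $qe^{Q_{t-1}}\mathcal{A}_0$, $qe^{Q_{t-1}}\mathcal{A}_1$. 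Each of these coefficients has order $<t$, so {\em Lemma \ref{l5}} applies after grouping the terms by the distinct values among $\{0,\omega_1,2\omega_1,v_t,v_t+\omega_1\}$: the total coefficient of every surviving exponential must vanish.

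First I would pin down $H_0$ and the constant level. The exponent $0$ can coincide with another only if $v_t=-\omega_1$; but then $e^{2\omega_1 z^t}$ is isolated and {\em Lemma \ref{l5}} gives $H_1^2\equiv0$, impossible. Hence the constant term is always isolated, so $H_0^2+a_1H_0=P$, and since $H_0$ has order $<t$ a short Borel-type argument (again {\em Lemma \ref{l5}}, applied to the inner exponentials of $H_0$) shows a quadratic in $H_0$ can equal a polynomial only when $H_0$ is itself a polynomial. Next I split on whether $v_t=\omega_1$. If $v_t\neq\omega_1$, then $e^{\omega_1 z^t}$ is isolated, forcing $2H_0+a_1\equiv0$, i.e. $H_0=-\tfrac{a_1}{2}$ and $P=-\tfrac{a_1^2}{4}$; balancing $e^{2\omega_1 z^t}$ then requires $v_t=2\omega_1$ (otherwise $H_1^2\equiv0$), which pairs $H_1^2$ with $qe^{Q_{t-1}}\mathcal{A}_0$ and isolates the $\mathcal{A}_1$-term at $e^{3\omega_1 z^t}$, giving $\mathcal{A}_1\equiv0$, $H_1^2=\tfrac{b_0a_1}{2}qe^{Q_{t-1}}$ and $L(z,f)=b_0H_0$: this is case {\em (II)}. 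If instead $v_t=\omega_1$, the five exponents collapse to $\{0,\omega_1,2\omega_1\}$ and {\em Lemma \ref{l5}} yields $H_0^2+a_1H_0=P$, $H_1(2H_0+a_1)+qe^{Q_{t-1}}\mathcal{A}_0=0$ and $H_1^2+qe^{Q_{t-1}}\mathcal{A}_1=0$. When $2H_0+a_1\equiv0$ the middle relation forces $\mathcal{A}_0\equiv0$, whence $H_0=-\tfrac{a_1}{2}$, $P=-\tfrac{a_1^2}{4}$, $H_1^2=-qe^{Q_{t-1}}\mathcal{A}_1$ and $L(z,f)=\mathcal{A}_1e^{\omega_1 z^t}$, which is case {\em (III)}.

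The remaining subcase, $v_t=\omega_1$ together with $2H_0+a_1\not\equiv0$, is where the work lies, and here the goal is to prove $t=1$ (which then gives $\rho(f)=\deg Q=1$ and forces $H_0,H_1$ to be polynomials, i.e. case {\em (I)}). From the middle relation and $H_0$ being a polynomial I would solve $H_1=-\dfrac{qe^{Q_{t-1}}\mathcal{A}_0}{2H_0+a_1}$; entireness of $H_1$ makes the rational factor a polynomial $R$, so $H_1=Re^{Q_{t-1}}$ and hence $f=H_0+R\,e^{Q}$ with $H_0,R$ polynomials (recall $Q=\omega_1z^t+Q_{t-1}$). Re-substituting into (\ref{e1.8}) and grouping by $e^{0},e^{Q},e^{2Q}$, the top level $e^{2Q}$ produces $R^2+q\sum_{i=0}^{k}b_i\tilde R_{r_i}(z+c_i)e^{Q(z+c_i)-Q(z)}=0$, where $\tilde R_{r_i}e^{Q}=(Re^{Q})^{(r_i)}$. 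The crux is that the shift-exponents $Q(z+c_i)-Q(z)$ have degree $t-1$: if $t\geq2$ they are non-constant and pairwise distinct, so {\em Lemma \ref{l5}} forces every polynomial factor $b_i\tilde R_{r_i}(z+c_i)$ with $c_i\neq0$ to vanish; but $\tilde R_{r_i}$ has leading part $R(Q')^{r_i}\not\equiv0$ (as $R\not\equiv0$ and $\deg Q'=t-1\geq1$), a contradiction, so $t=1$. I expect this last step to be the main obstacle, since it requires controlling the delay-shifted exponentials $e^{Q(z+c_i)-Q(z)}$ and verifying that they are genuinely distinct and non-constant precisely in the range $t\geq2$ where {\em Lemma \ref{l5}} can be brought to bear; the degenerate configurations (coincident shifts, or purely differential operators with all $c_i=0$) would have to be checked separately to close the argument.
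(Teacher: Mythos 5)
Your proposal follows essentially the same route as the paper's proof: the same substitution into (\ref{e2.13}), the same separation of exponentials via Lemma \ref{l5}, the same three outcomes, and, in the residual case $v_t=\omega_1$ with $2H_0+a_1\not\equiv 0$, the same device of solving the middle relation to get $H_1=\beta(z)e^{Q_{t-1}(z)}$ with $\beta$ a polynomial and then using the degree-$(t-1)$ exponents $\omega_1(z+c_i)^t-\omega_1z^t$ to force $t=1$ (the paper's (\ref{e2.16})--(\ref{e2.20})); your reorganization of the case split by the value of $v_t$ rather than by which of $\mathcal{A}_0,\mathcal{A}_1$ vanish is only cosmetic. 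One genuine omission: the lemma's conclusion includes the reduction to a single exponential term, i.e.\ $m=1$, which the paper obtains by invoking Lemma \ref{l9} (built on Lemma \ref{l8}); you simply start from the $m=1$ form without justification, so that step must be supplied. Finally, the degenerate configurations you flag at the end (coincident shifts $c_i=c_j$, or all $c_i=0$, under which the exponentials $e^{Q(z+c_i)-Q(z)}$ are not pairwise distinct or are constant) are a real issue for the Lemma \ref{l5} step applied to (\ref{e2.20}), but the paper's own proof does not treat them either --- it asserts $q(z)\equiv 0$ directly --- so on this point your argument is no less complete than the published one, and more candid about where the weakness lies.
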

\begin{proof} [\bf\underline{Proof}]
	For $n=2$, (\ref{e1.8}) becomes \bea\label{e2.13} f^{2}(z)+a_{1}f(z)+q(z)e^{Q(z)}L(z,f)=P(z).\eea By {\em Lemma \ref{l9}}, we have $m=1$, i.e., (\ref{e1.2}) becomes \bea\label{e2.14}f(z) = H_0(z) + H_1(z)e^{\omega_1z^t},\eea where $H_0(z)$, $H_1(z)(\not\equiv 0)$ are either exponential polynomials of order $<t$ or ordinary polynomials in $z$. Substituting (\ref{e2.14}) in (\ref{e2.13}), we have \bea\label{e2.15} &&H_1(z)(2H_0(z)+a_1)e^{\omega_1z^t} + H_1^2(z)e^{2\omega_1z^t} +q(z)e^{Q_{t-1}(z)}{\mathcal{A}_0(z)e^{v_tz^t}}\nonumber\\&&\quad+q(z)e^{Q_{t-1}(z)}\mathcal{A}_1(z)e^{(v_t+\omega_1) z^t}=P(z)-H_0(z)(H_0(z)+a_1),\eea where $Q_{t-1}(z) =Q(z)-v_tz^t$ with $\deg Q_{t-1}(z)\leq q-1$ and $\mathcal{A}_0(z)=\sum_{i=0}^{k}b_iH^{(r_i)}_0(z+c_i)$, $\mathcal{A}_1(z)=\sum_{i=0}^{k}b_i\tilde{H}_1(z+c_i)e^{\omega_1(z+c_i)^t-\omega_1z^t}$ such that $\tilde{H}_h(z+c_i)$ are the delay-differential polynomial of $H_h(z)$ for $h=1,2$.
	%\beas \tilde{H}_h(z+c_i) &=& H_h(z+c_i)M_{r_i}(g'(z+c_i),g''(z+c_i),\ldots,g^{(r_i)}(z+c_i))\\&&+H_h'(z+c_i)M_{r_i-1}(g'(z+c_i),g''(z+c_i),\ldots,g^{(r_i-1)}(z+c_i))\\&&+ \cdots +  H_h^{(r_i-1)}(z+c_i)M_1(g'(z+c_i)) + H_h^{(r_i)}(z+c_i)\eeas
	Since, $L(z,f)\not\equiv 0$, then at least one of $\mathcal{A}_0(z)$ and $\mathcal{A}_1(z)$ is non-vanishing. Next, we divide the following cases to prove our result.\\
	{\bf{\underline{Case 1.}}} Let $\mathcal{A}_1(z)\equiv 0$. Then $\mathcal{A}_0(z)\not\equiv 0$, which implies $H_0(z)\not\equiv 0$.\\
	If $v_t\neq \omega_1,2\omega_1$ or $v_t=\omega_1$, applying {\em Lemma \ref{l5}} on (\ref{e2.15}), we have $H_1^2(z)\equiv 0$, a contradiction.\\
	If $v_t=2\omega_1$, then, applying {\em Lemma \ref{l5}} on (\ref{e2.15}), we have $$H_1(2H_0(z)+a_1)= 0,$$ $$ H_1^2(z)+q(z)e^{Q_{t-1}(z)}{\mathcal{A}_0(z)}=0,$$ $$P(z)-H_0(z)(H_0(z)+a_1)=0.$$ Since, $H_1(z)\not\equiv 0$. Therefore, solving these three equations, we have $H_0(z)=-\frac{a_1}{2}$, $P(z)=-\frac{a_1^2}{4}$ and $H_1^2(z)=\frac{b_0a_1}{2}q(z)e^{Q_{t-1}}$. Therefore, in this case $L(z,f)=b_0H_0(z)$.\\
	{\bf{\underline{Case 2.}}} Let $\mathcal{A}_0(z)\equiv 0$. Then $\mathcal{A}_1(z)\not\equiv 0$.\\
	{\bf{\underline{Sub-case 2.1.}}} Let $H_0(z)\equiv 0$. If $v_t=\pm\omega_1$, using {\em Lemma \ref{l5}} on (\ref{e2.15}), we have $H_1\equiv 0$, a contradiction.\\
	{\bf{\underline{Sub-case 2.2.}}} Let $H_0(z)\not\equiv 0$. If $v_t=-\omega_1$, in view of {\em Lemma \ref{l5}}, from on (\ref{e2.15}), we have $H_1\equiv 0$, a contradiction. If $v_t=\omega_1$, similar as {\em Case 1}, we have $H_0(z)=-\frac{a_1}{2}$, $P(z)=-\frac{a_1^2}{4}$ and $H_1^2(z)=-q(z)e^{Q_{t-1}}\mathcal{A}_1$. Also, in this case $L(z,f)=\mathcal{A}_1e^{\omega_1z^t}$.\\
	{\bf{\underline{Case 3.}}} Let $\mathcal{A}_0(z)\not\equiv 0$ and $\mathcal{A}_1(z)\not\equiv 0$, which implies $H_0(z)\not\equiv 0$.\\
	If $v_t=-\omega_1$ or $v_t\neq\pm\omega_1$, using {\em Lemma \ref{l5}} on (\ref{e2.15}), we have $H_1\equiv 0$, a contradiction. If $v_t=2\omega_1$, by {\em Lemma \ref{l5}}, from (\ref{e2.15}), we have $\mathcal{A}_1\equiv 0$, a contradiction.\\
	If $v_t=\omega_1$, applying {\em Lemma \ref{l5}} on (\ref{e2.15}), we have 
	\bea\label{e2.16}H_1(z)(2H_0(z)+a_1)+q(z)e^{Q_{t-1}(z)}\sum_{i=0}^{k}b_iH^{(r_i)}_0(z+c_i)=0,\eea \bea\label{e2.17}H_1^2(z)+q(z)e^{Q_{t-1}(z)}\sum_{i=0}^{k}b_i\tilde{H}_1(z+c_i)e^{\omega_1(z+c_i)^t-\omega_1z^t}=0,\eea \bea\label{e2.18}P(z)-H_0(z)(H_0(z)+a_1)=0.\eea
	Now, we show that $H_0(z)$ is a polynomial. If possible let, $H_0(z)$ is transcendental. Then from (\ref{e2.18}), we have $$2T(r,H_0(z))+S(r,H_0(z))=T(r,P)=O(\log r),$$ a contradiction.\\ Next, from (\ref{e2.16}), we have \bea\label{e2.19} H_1(z)=\beta(z)e^{Q_{t-1}(z)},\eea where $\beta(z)=-\frac{q(z)\sum_{i=0}^{k}b_iH^{(r_i)}_0(z+c_i)}{2H_0(z)+a_1}$. Since, $f$ is entire, then $\beta(z)$ is a polynomial. Substituting (\ref{e2.19}) in (\ref{e2.17}), we have
	\bea\label{e2.20} \beta^2(z)+q(z)\sum_{i=0}^{k}b_i\tilde{\beta}(z+c_i)e^{Q_{t-1}(z+c_i)-Q_{t-1}(z)+\omega_1(z+c_i)^t-\omega_1z^t}=0, \eea where $\tilde{\beta}(z+c_i)$ is a delay-differential polynomial in $H_0(z)$ and $Q_{t-1}(z)$.\\
	Note that $\deg(\omega_1(z+c_i)^t-\omega_1z^t)=t-1$ and $\deg(Q_{t-1}(z+c_i)-Q_{t-1}(z))\leq t-2$. If $t\geq 2$, applying {\em Lemma \ref{l5}} on (\ref{e2.20}), we have $q(z)=0$, a contradiction. Therefore, $t=1$, i.e., $H_1(z)$ is a polynomial. Hence, $f(z)$ reduces to the form $$f(z) = H_0(z) + H_1(z)e^{\omega_1z},$$ where $H_0(z)$ and $H_1(z)$ are polynomials. So, $f\in\Gamma_1'$.
\end{proof}
\section{Proofs of Theorems}
\begin{proof} [\bf\underline{Proof of Theorem \ref{t1.1} (i)}]
	Suppose that $f$ be a finite order non-vanishing entire solution of (\ref{e1.8}). Using {\em Lemma \ref{l5}}, we have $f$ is transcendental. Otherwise, we will get $L(z,f)\equiv 0$, which yields a contradiction. In view of {\em Lemma \ref{l3}}, from (\ref{e1.8}), we get
	\bea\label{e3.1} n~T(r, f)+S(r, f)&=&m\left(r, f^{n}(z)+\sum_{i=1}^{n-1}a_{i}f^{i}(z)\right)\nonumber\\&=&m(r,P(z))-q(z)e^{Q(z)}L(z,f))\nonumber\\&=&m(r,e^{Q(z)})+m(r,L(z,f))+O(1)\nonumber\\&=&m(r,e^{Q(z)})+m\left(r,\frac{L(z,f)}{f(z)}\right)+m(r,f(z))+O(1)\nonumber\\&=&T(r,e^{Q(z)})+T(r,f(z))+S(r,f)\nonumber\\\implies (n-1)~T(r, f)&\leq& T(r,e^{Q(z)})+S(r,f).\eea
	Therefore, for $n\geq 2$, $\rho(f)\leq \deg{Q(z)}$. If $\rho(f)< \deg{Q(z)}$, then comparing order of growth of (\ref{e1.8}), we get a contradiction. So, $\rho(f)= \deg{Q(z)}$. Now, from the definition of type, we have $$\tau(f)=\ol\lim_{r\rightarrow\infty}\frac{T(r,f)}{r^{\rho(f)}}=\ol\lim_{r\rightarrow\infty}\frac{T(r,f)}{r^{\deg{Q(z)}}}\in(0,\infty),$$ i.e., $f$ is of mean type.
\end{proof}
\begin{proof} [\bf\underline{Proof of Theorem \ref{t1.1} (ii)}]
	First, we prove that if zero is a Borel exceptional value of $f(z)$, then we have $a_{n-1}=\cdots=a_1=0\equiv P(z)$. Adopting the similar process as done in the proof of Theorem 1.2(b) in \cite{Li-Yang_jmaa_2017}, using {\em Lemmas \ref{l3}, \ref{l4}, \ref{l6}} and replacing $f(z+c)$ by $L(z,f)$, we can prove our result. In this regards, only the equation (10) of \cite{Li-Yang_jmaa_2017} is replaced by the following lines \beas N\left(r,\frac{1}{G(z)}\right)&=&N\left(r,\frac{1}{q(z)e^{Q(z)}L(z,f)}\right)\nonumber\\&\leq& N\left(r,\frac{1}{q(z)}\right)+N\left(r,\frac{1}{L(z,f)}\right)+S(r,f)\nonumber\\&\leq& (k+1)~N\left(r,\frac{1}{f(z)}\right)+S(r,f)=S(r,f).\eeas
	
	Next, we prove the converse part of {\em Theorem \ref{t1.1} (ii)}. For this, using {\em Lemmas \ref{l3}, \ref{l6}} and replacing $f(z+c)$ by $L(z,f)$, we proceed similar up to equation (17) in the proof of Theorem 1.2(c) in \cite{Li-Yang_jmaa_2017}. Here, the equations (15), (16) and (17) of \cite{Li-Yang_jmaa_2017}, respectively, will be \bea\label{e3.2}T\left(r,\frac{L(z,f)}{f(z)}\right)=S(r,f),\eea 
	\bea\label{e3.3}\left(f(z)+\frac{a_{n-1}}{n-1}\right)^{n-1}=f^{n-1}(z)+\sum_{i=1}^{n-1}a_{i}f^{i-1}(z)=-q(z)e^{Q(z)}\frac{L(z,f)}{f(z)}\eea and \bea\label{e3.4} a_{i+1}=\frac{(n-1)!}{i!(n-1-i)!}\left(\frac{a_{n-1}}{n-1}\right)^{n-1-i},\;\;i=0,1,...,n-2.\eea

	{\bf Case 1.} If there exists $i_0\in\{1,\ldots, n-1\}$ such that $a_{i_0}=0$, then from (\ref{e3.4}), we have all $a_{i}$ must be equal to zero for $i=1,2,\ldots, n-1$. Therefore, (\ref{e3.3}) becomes \beas f(z)^{n-1}=-q(z)e^{Q(z)}\frac{L(z,f)}{f(z)}.\eeas By using (\ref{e3.2}), for each $\epsilon>0$, we have \beas (n-1)N\left(r,\frac{1}{f(z)}\right)&=&N\left(r,\frac{1}{q(z)\frac{L(z,f)}{f(z)}}\right)\\&\leq& N\left(r,\frac{1}{q(z)}\right)+ N\left(r,\frac{1}{\frac{L(z,f)}{f(z)}}\right)+S(r,f)=S(r,f).\eeas Therefore, $\lambda(f)<\rho(f)$.
	
	{\bf Case 2.} If there exists no $i_0\in\{1,\ldots, n-1\}$ such that $a_{i_0}=0$, then from (\ref{e3.2}) and (\ref{e3.3}), we have \beas \ol N\left(r,\frac{1}{f(z)+\frac{a_{n-1}}{n-1}}\right)\leq \ol N\left(r, \frac{1}{q(z)} \right)+\ol N\left(r, \frac{1}{\frac{L(z,f)}{f(z)}} \right)+S(r,f)=S(r,f).\eeas Using the second main theorem, we have \beas T(r,f)&\leq& \ol N\left(r,\frac{1}{f(z)+\frac{a_{n-1}}{n-1}}\right)+\ol N\left(r,\frac{1}{f(z)}\right)+\ol N\left(r,f(z)\right)\\&=&\ol N\left(r,\frac{1}{f(z)}\right)+S(r,f)\eeas Therefore, $\rho(f)\leq\lambda(f)$ but we know that $\lambda(f)\leq\rho(f)$. Therefore, $\lambda(f)=\rho(f)$.
\end{proof}
\begin{proof} [\bf\underline{Proof of Theorem \ref{t1.1} (iii)}]
	Suppose that $f$ be a non-vanishing finite order entire solution of (\ref{e1.8}). Similar as {\em Theorem \ref{t1.1} (i)}, $f$ is transcendental.\par First suppose that $f$ belongs to $\Gamma_0'$, which means that $0$ is a Borel exceptional value of $f$. Thus, from {\em Theorem \ref{t1.1} (ii)}, we have $a_{n-1}=\cdots=a_1=0\equiv P(z)$.\par Next, we suppose that $P(z) \equiv 0$ and there exists an $i_0\in\{1, \ldots, n-1\}$ such that $a_{i_0}=0$, then from the converse part of {\em Theorem \ref{t1.1} (ii)}, we have all of the $a_i (i=1,\ldots, n-1)$ must be zero as well and $\lambda(f) <\rho(f)$. From Hadamard factorization theorem, we can see that \bea\label{e3.5}f(z)=h(z)e^{\alpha(z)},\eea where $\alpha(z)$ is a polynomial and $h(z)$ is the canonical product of zeros of $f$ with $\deg{\alpha(z)}=\rho(f)=\deg{Q(z)}=t$ and $\rho(h)=\lambda(h)=\lambda(f)<\rho(f)$.\\ Substituting (\ref{e3.5}) in (\ref{e1.8}) with all $a_i=0$, we have \bea\label{e3.6} h^n(z)e^{n \alpha(z)}+q(z)e^{Q(z)+\alpha(z)}\left(\sum_{i=0}^{k}L_i(z,h)e^{\Delta_{c_i}\alpha(z)}\right)=0,\eea where \beas L_i(z,h)&=&b_i\left[h(z+c_i)M_{k_i}(\alpha'(z+c_i), \alpha''(z+c_i),\ldots,\alpha^{(k_i)}(z+c_i))\right.\\&&\qquad\left.+h'(z+c_i)M_{k_i-1}(\alpha'(z + c_i), \alpha''(z+c_i),\ldots,\alpha^{(k_i-1)}(z+c_i))\right.\\&&\qquad\left.+ \cdots +h^{(k_i-1)}(z+c_i)M_1(\alpha'(z+c_i))+h^{(k_i)}(z+c_i)\right].\eeas
	Clearly, $\rho(L_i(z,h))< t$. Rewriting (\ref{e3.6}), we have \bea\label{e3.7} h^n(z)e^{n \alpha_{t-1}(z)}e^{n u_tz^t}+q(z)e^{\alpha_{t-1}(z)+Q_{t-1}(z)}\left(\sum_{i=0}^{k}L_i(z,h)e^{\Delta_{c_i}\alpha(z)}\right)e^{(u_t+v_t)z^t}=0\eea such that $\alpha(z)=u_tz^t+\alpha_{t-1}(z)$ and $Q(z)=v_tz^t+Q_{t-1}(z)$, where $u_t$, $v_t$ are non-zero constants and $\alpha_{t-1}(z)$, $Q_{t-1}(z)$ are of degree $\leq t-1$.\\ In view of {\em Lemma \ref{l5}}, we can easily say that (\ref{e3.7}) is possible only when $(n-1)u_t=v_t$. Therefore, (\ref{e3.7}) becomes \bea\label{e3.8} h^n(z)+q(z)e^{(1-n)\alpha_{t-1}(z)+Q_{t-1}(z)}\left(\sum_{i=0}^{k}L_i(z,h)e^{\Delta_{c_i}\alpha(z)}\right)=0.\eea Here, the following cases arise.\\
	{\bf\underline{Case 1:}} Let $\rho(h)<t-1>0$. If $\deg\{(1-n)\alpha_{t-1}(z)+Q_{t-1}(z)\}= t-1$, applying {\em Lemma \ref{l5}}, we have $q(z)=0$, a contradiction. If $\deg\{(1-n)\alpha_{t-1}(z)+Q_{t-1}(z)\}< t-1$, from $\deg \{\Delta_{c_i}\alpha(z)\}=t-1$, by using {\em Lemma \ref{l5}}, again we have $q(z)=0$, a contradiction.\\
	{\bf\underline{Case 2:}} Let $\rho(h)\geq t-1>\rho(h)-1$, $t-1>0$. By logarithmic derivative lemma \cite[Corollary 2.5]{Chiang-Feng}, for each $\epsilon>0$, we have \bea\label{e3.9} m\left(r,\frac{L_i(z,h)}{h(z)}\right)=O(r^{\rho(h)-1+\epsilon})+O(\log r).\eea
	Since, $h$ is entire, using (\ref{e3.8}) and (\ref{e3.9}), we have \bea\label{e3.10} T\left(r,\sum_{i=0}^{k}\frac{L_i(z,h)}{h}e^{\Delta_{c_i}\alpha(z)}\right)&=&m\left(r,\sum_{i=0}^{k}\frac{L_i(z,h)}{h}e^{\Delta_{c_i}\alpha(z)}\right)+N\left(r,\sum_{i=0}^{k}\frac{L_i(z,h)}{h}e^{\Delta_{c_i}\alpha(z)}\right)\nonumber\\&\leq& \sum\limits_{i=0}^{k}T\left(r,e^{\Delta_{c_i}\alpha(z)}\right)+O(r^{\rho(h)-1+\epsilon})+O(\log r).\eea
	Therefore, using (\ref{e3.8}) and (\ref{e3.10}), we obtain for each $\epsilon>0$ \bea\label{e3.11} (n-1)N\left(r,\frac{1}{h(z)}\right)&\leq& N\left(r,\frac{1}{\sum_{i=0}^{k}\frac{L_i(z,h)}{h}e^{\Delta_{c_i}\alpha(z)}}\right)+O(\log r)\nonumber\\&\leq& T\left(r,\sum_{i=0}^{k}\frac{L_i(z,h)}{h}e^{\Delta_{c_i}\alpha(z)}\right)+O(\log r)\nonumber\\&\leq& \sum\limits_{i=0}^{k}T\left(r,e^{\Delta_{c_i}\alpha(z)}\right)+O(r^{\rho(h)-1+\epsilon})+O(\log r).\eea
	Now, if $c_i=c_j$ for all $1\leq i,j\leq k$, say, $c$, then (\ref{e3.11}) becomes \beas (n-1)N\left(r,\frac{1}{h(z)}\right)\leq T\left(r,e^{\Delta_{c}\alpha(z)}\right)+O(r^{\rho(h)-1+\epsilon})+O(\log r).\eeas
	Thus, from the above equation, we have $\lambda(h)\leq t-1$. But in this case, $\lambda(h)=\rho(h)\geq t-1$. Therefore, $\lambda(f)=\lambda(h)=t-1=\rho(f)-1$.\par If $t-1=0$, we get $\rho(h)=\lambda(h)<\rho(f)=t=1$. If $h(z)$ is transcendental, then $h(z)$ has infinitely many zeros. Now noting that $\Delta_{c_i}\alpha(z)$ is of degree $t-1$, from (\ref{e3.11}), we get $N\left(r,\frac{1}{h(z)}\right)=O(\log r)$, a contradiction. Therefore, $h(z)$ will be a polynomial. So, $f$ belongs to $\Gamma_0'$.\\
	{\bf\underline{Case 3:}} Let $\rho(h)\geq t$. Then from (\ref{e3.11}), we obtain $\lambda(h)<\rho(h)$, a contradiction. So, $h(z)$ will be a polynomial. Therefore, $f$ belongs to $\Gamma_0'$.
\end{proof}
\begin{proof} [\bf\underline{Proof of Theorem \ref{t1.1} (iv)}]
	Suppose that $f$ is a non-vanishing finite order entire solution of (\ref{e1.8}). Similarly, $f$ is transcendental. If possible let, $P(z)\not\equiv 0$. By the assumption, $ card \{z: p(z) =p'(z) =p''(z) =0\} \geq 1$ or $ card \{z: p(z) =p'(z) =0\} \geq 2$, where $p(z) =z^n+a_{n-1}z^{n-1}+\cdots+a_1z$, we mean that $p(z)$ has at least one zero with multiplicity at least three or at least $2$ zeros with multiplicities at least two. In view of {\em Lemma \ref{l3}} and the second main theorem, we have \beas n~T(r,f)&=&T(r,f^n(z)+a_{n-1}f^{n-1}(z)+\cdots+a_1f(z))\\&\leq&\ol N\left(r,\frac{1}{f^n+a_{n-1}f^{n-1}+\cdots+a_1f-P(z)}\right)\\&&+\ol N\left(r,\frac{1}{f^n+a_{n-1}f^{n-1}+\cdots+a_1f}\right)+\ol N(r,f^n+a_{n-1}f^{n-1}+\cdots+a_1f)\\&\leq&\ol N\left(r,\frac{1}{q(z)L(z,f)}\right)+(n-2)~T(r,f)+S(r,f)
	\\&\leq& T\left(r,q(z)L(z,f)\right)+(n-2)~T(r,f)+S(r,f)\\&\leq& N\left(r,q(z)L(z,f)\right)+m\left(r,q(z)L(z,f)\right)+(n-2)~T(r,f)+S(r,f)\\&\leq& m\left(r,q(z)\frac{L(z,f)}{f(z)}\right)+m(r,f(z))+(n-2)~T(r,f)+S(r,f)\\&\leq& T(r,f)+(n-2)~T(r,f)+S(r,f)\\&\leq&(n-1)~T(r,f)+S(r,f), \eeas a contradiction. Therefore, $P(z)\equiv 0$.\par Since, at least one $a_{i_0}=0$ $(i_0=1,2, \ldots, n-1)$, using {\em Theorem \ref{t1.1} (iii)}, we have $f\in\Gamma_0'$.\\ Note that, since, $P(z)\equiv 0$ and at least one $a_{i_0}=0$ $(i_0=1,2, \ldots, n-1)$, from {\em Theorem \ref{t1.1} (ii)}, we have all of $a_j$'s $(j=1,\ldots,n-1)$ must be zero. Therefore $p(z)$ is of the form $z^n$, which implies $ card \{z: p(z) =p'(z) =0\} \geq 2$ is not possible.
	%Note that in this case, when $ card \{z: p(z) =p'(z) =0\} \geq 2$, $p(z)$ is of the minimum form $p(z)=z^2(z-z_1)^2p_1(z)=(z^4-2z_1z^3+z_1^2z^2)p_1(z)$ such that $p_1(z)$ is any polynomial of degree $n-4$ and $0$, $z_1(\neq 0)$ are the zeros of $p(z)$ with multiplicity $2$. Clearly, for any $p_1(z)$, $a_1$ must be zero. Then from {\em Theorem \ref{t1.1} (ii)}, we have all of $a_{i}\;(i_0=1,2, \ldots, n-1)$ are zero, which contradicts the minimum form of $p(z)$. Hence, in this case, $ card \{z: p(z) =p'(z) =0\} \geq 2$ is not possible.
\end{proof}
\begin{proof} [\bf\underline{Proof of Theorem \ref{t1.1} (v)}]
	From {\em Theorem \ref{t1.1} (i)}, we have $\rho(f)=\deg(Q(z))$. Using {\em Lemmas \ref{l8} - \ref{l10}}, we can prove the result.	
\end{proof}

\end{document}